

\documentclass[journal]{IEEEtran}

\usepackage{amsmath}    
\usepackage{graphics,epsfig,subfigure}    
\usepackage{verbatim}   
\usepackage{color}      
\usepackage{subfigure}  
\usepackage{hyperref}   
\usepackage{amssymb}
\usepackage{epstopdf}
\usepackage{graphicx}
\DeclareGraphicsRule{.jpg}{eps}{.bb}{}

\newcommand{\prob}{Pr}
\newcommand{\expect}[1]{\mathbb{E}\big\{#1\big\}}

\newcommand{\bv}[1]{{\boldsymbol{#1} }}
\newcommand{\script}[1]{{{\cal{#1} }}}

\begin{document}
\title{Utility Optimal Scheduling in Processing Networks}

\author{\large{Longbo Huang, Michael J. Neely}%
\thanks{Longbo Huang (web:  http://www-scf.usc.edu/$\sim$longbohu) and Michael J. Neely (web:  http://www-rcf.usc.edu/$\sim$mjneely)
are with the Dept. of Electrical
Eng., University of Southern California, Los Angeles, CA 90089, USA.}%
\thanks{This material is supported in part  by one or more of 
the following: the DARPA IT-MANET program
grant W911NF-07-0028, the NSF grant OCE 0520324, 
the NSF Career grant CCF-0747525.} }
\maketitle

\newtheorem{rem}{Remark}
\newtheorem{fact_def}{\textbf{Fact}}
\newtheorem{coro}{\textbf{Corollary}}
\newtheorem{lemma}{\textbf{Lemma}}
\newtheorem{main}{\textbf{Proposition}}
\newtheorem{theorem}{\textbf{Theorem}}
\newtheorem{claim}{\emph{Claim}}
\newtheorem{prop}{Proposition}
\newtheorem{assumption}{\textbf{Assumption}}
\newtheorem{condition}{\textbf{Condition}}


 
 \begin{abstract}
 We consider the problem of utility optimal scheduling 
in general \emph{processing networks} with random arrivals and network conditions.  These are generalizations
of traditional data networks where commodities in one or more queues
can be combined to produce new commodities that are delivered to other
parts of the network.   This can be used to model problems
such as in-network data fusion, stream processing, and grid
computing.  Scheduling actions are complicated by the \emph{underflow
problem} that arises when some queues with required components 
go empty.  In this paper, we develop the Perturbed Max-Weight algorithm
(PMW) to achieve optimal utility. The idea of PMW is to perturb
the weights used by the usual Max-Weight algorithm to ``push'' queue
levels towards non-zero values (avoiding underflows). We show that
when the perturbations are carefully chosen, PMW is able to achieve
a utility that is within $O(1/V)$ of the optimal value for any $V\geq1$, while
ensuring an average network backlog of $O(V)$.  
 \end{abstract}

\begin{keywords}
Dynamic Control, Processing Networks, Data Fusion, Lyapunov Analysis,  Stochastic Optimization
\end{keywords}

\section{Introduction}
Recently, there has been much attention on developing optimal scheduling algorithms for the class of \emph{processing networks} e.g., \cite{harrison-brownian}, \cite{dai-maxweight-spn},  \cite{jiang-spn}, \cite{zhao-forkandjoin-spn}, \cite{neelyhuang_assembly}. These networks are generalizations of traditional data networks. Contents in these networks can  represent  information, data packets, or certain raw materials, that need to go through multiple processing stages in the network before they can be utilized. One example of such processing networks is the Fork and Join network considered in \cite{zhao-forkandjoin-spn}, which models, e.g., stream processing \cite{amini-streamprocessing} \cite{ibm-stream-processing-10} and grid computing \cite{cao-gridcomuting}.  In the stream processing case, the contents in the network represent different types of data, say voice and video, that need to be combined or jointly compressed, and the network topology represents a particular sequence of operations that needs to be conducted during processing. 
Another example of a processing network is a sensor network that performs data fusion \cite{eswaran-datafusion}, in which case sensor data must first be fused before it is delivered. Finally,  these processing networks also contain the class of manufacturing networks, where raw materials are assembled into products \cite{jiang-spn}, \cite{neelyhuang_assembly}. 

In this paper, we develop optimal scheduling algorithms for the following general utility maximization problem in processing networks. 
We are given a discrete time stochastic processing network. The network state, which describes the network randomness (such as random channel conditions or commodity arrivals), is time varying according to some probability law. A network controller performs some action at every time slot,  based on the observed network state, and subject to the constraint that \emph{the network queues must have enough contents to support the action}. 
The chosen action generates some utility,  but also consumes some amount of contents from some queues, and possibly generates new contents for some other queues.  These contents  cause congestion, and thus lead to backlogs at queues in the network. The goal of the controller is to maximize its time average utility subject to the constraint that the time average total backlog in the network is finite. 


Many of the utility maximization problems in data networks fall into this general framework. For instance,  \cite{eryilmaz_qbsc_ton07},  \cite{neelyenergy},  \cite{huangneelypricing}   \cite{rahulneelycognitive}, \cite{neelysuperfast}, can be viewed as special cases of the above framework which allow scheduling actions to be independent of the content level in the queues (see \cite{yichiang_netopt08} for a survey of problems in data networks). 
By comparing the processing networks with the data networks, we note that the main difficulty in performing utility optimal scheduling in these processing networks is that  \emph{we need to build an optimal scheduling algorithm on top of a mechanism that  prevents queue underflows}. 
Such scheduling problems with underflow constraints are usually  formulated as dynamic programs, e.g., \cite{shuman-underflow}, which require substantial  statistical knowledge of the network randomness, and are usually very difficult to solve. 




In this paper, we develop the Perturbed Max-Weight algorithm (PMW) for achieving optimal utility in processing networks. PMW is a greedy algorithm that makes decisions every time slot, \emph{without requiring any statistical knowledge of the network randomness}. PMW is based on the Max-Weight algorithm developed in the data network context \cite{tassiulas92} \cite{neelynowbook}. There, Max-Weight has been shown to be able to achieve a time average utility that is within $O(1/V)$ of the optimal network utility for any $V\geq1$, while ensuring that the average network delay is $O(V)$, when the network dynamics are i.i.d. \cite{neelynowbook}. 
The idea of PMW is  to perturb the weights used in the Max-Weight algorithm so as to ``push'' the queue sizes towards some nonzero values. Doing so properly, we can ensure that the queues always have enough contents for the scheduling actions. Once this is accomplished, we then do scheduling as in the usual Max-Weight algorithm with the perturbed weights. In this way, we simultaneously avoid queue underflows and achieve good utility performance, and also eliminate the need to solve complex dynamic programs. 

The PMW algorithm is quite different from the approaches used in the processing network literature. \cite{harrison-brownian} analyzes manufacturing networks using Brownian approximations.  \cite{dai-maxweight-spn} applies the Max-Weight algorithm to do scheduling in manufacturing networks, assuming all the queues always have enough contents. \cite{jiang-spn} develops the Deficit Max-Weight algorithm (DMW), by using Max-Weight based on an alternative control process for decision making. \cite{zhao-forkandjoin-spn} formulates the problem as a convex optimization problem to match the input and output rates of the queues, without considering the queueing level dynamics. PMW instead provides a way to explicitly avoid queue underflows, and allow us to compute explicit backlog bounds. Our algorithm is perhaps most similar to the DMW algorithm in \cite{jiang-spn}. DMW achieves the desired performance by bounding the ``deficit'' incurred by the algorithm and applies to both stability  and utility maximization problems. 
Whereas PMW uses perturbations to avoid deficits entirely and allows for more general time varying system dynamics, e.g., random arrivals and random costs. 





The paper is organized as follows: In Section \ref{section:notation} we set up our notations. In Section \ref{section:datafusionexample}, we present a study on a data fusion example to demonstrate the main idea of the paper. 
In Section \ref{section:model} we state the general network model and the scheduling problem. In Section \ref{section:utilitybound} we characterize optimality, 
and in Sections \ref{section:pmw}  we develop the PMW algorithm and show its utility 
can approach the optimum.  Section \ref{section:specificpmw} constructs a PMW algorithm for a more specific yet general network. 
Simulation results are presented in Section \ref{section:simulation}. 


\section{Notations}\label{section:notation}
Here we first set up the notations used in this paper:
$\mathbb{R}$ represents  the set of real numbers. 
$\mathbb{R}_+$ (or $\mathbb{R}_-$) denotes the set of nonnegative (or non-positive) real numbers. 
$\mathbb{R}^n$ (or $\mathbb{R}^n_+$) is  the set of $n$ dimensional \emph{column} vectors, with each element being in $\mathbb{R}$ (or $\mathbb{R}_+$). 
\textbf{Bold} symbols $\bv{a}$ and $\bv{a}^T$ represent a \emph{column} vector and its transpose. 
$\bv{a}\succeq\bv{b}$ means vector $\bv{a}$ is entrywise no less than vector $\bv{b}$. 
$||\bv{a}-\bv{b}||$ is the Euclidean distance of $\bv{a}$ and $\bv{b}$. 
$\bv{0}$ and $\bv{1}$ denote column vectors with all elements being $0$ and $1$. 
For any two vectors $\bv{a}=(a_1, ..., a_n)^T$ and $\bv{b}=(b_1, ..., b_n)^T$, the vector $\bv{a}\otimes\bv{b}=(a_1b_1, ..., a_nb_n)^T$. 
Finally $[a]^+=\max[a, 0]$. 

\section{A data processing example}\label{section:datafusionexample}
In this section, we study  a data fusion example and develop the Perturbed Max-Weight algorithm (PMW) in this case. This example demonstrates the main idea of this paper. 
We will later present our general model in Section \ref{section:model}. 
 
\subsection{Network Settings}
We consider a network shown in Fig. \ref{fig:netexample}, where the network performs a $2$-stage data processing for the data entering into the network. 

\begin{figure}[cht]
\centering
\includegraphics[height=0.8in, width=3.3in]{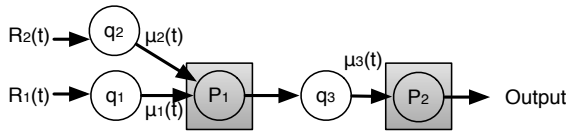}
\caption{An example network consisting of three queues $q_1, q_2, q_3$ and two processors $P_1, P_2$.}\label{fig:netexample}
\end{figure}

In this network, there are two random data streams $R_1(t), R_2(t)$, which represent, e.g., sensed data that come into sensors, or video and voice data that need to be mixed. We assume that $R_i(t)=1$ or $0$, equally likely, for $i=1, 2$. At every time slot, the network controller first decides whether or not to admit the new arrivals, given that accepting any one new arrival unit incurs a cost of $1$. The controller then has to decide how to activate the two processors $P_1, P_2$ for data processing. We assume that both processors can be activated simultaneously. When activated, $P_1$ consumes one unit of data from both $q_1$ and $q_2$, and generates one unit of fused data into $q_3$. This data needs further processing that is done by $P_2$.  When $P_2$ is activated, it consumes one unit of data from $q_3$, and generates one unit of processed data. We assume that each unit of successfully fused and processed data generates a profit of $p(t)$, where $p(t)$ is i.i.d. and takes value $3$ or $1$ with equal probabilities. The network controller's objective is to maximize the average utility, i.e., profit minus cost, subject to queue stability. 

For the ease of presenting the general model later, we define a \emph{network state} $S(t)= (R_1(t), R_2(t))$, \footnote{The network state here contains just   $R_1(t)$ and $R_2(t)$. More complicated settings, where the amount consumed from queues may also depend on the random link conditions between queues and processors can also be modeled by incorporating the link components into the network state, e.g., \cite{huangneely_dr_wiopt09}. } which describes the current network randomness. We also denote the controller's \emph{action} at time $t$ to be $x(t)=(D_1(t), D_2(t), I_1(t), I_2(t))$, where $D_j(t)=1$ ($D_j(t)=0$) means to admit (reject) the new arrivals into queue $j$, and $I_i(t)=1$ ($I_i(t)=0$) means processor $P_i$ is activated (turned off). We note the following \emph{no-underflow constraints} must be met for all time when we activate processors $P_1, P_2$:
\begin{eqnarray}
I_1(t) \leq q_1(t), I_1(t)\leq q_2(t),  I_2(t)  \leq q_3(t).\label{eq:activation-feasible}
\end{eqnarray}
That is, $I_1(t)=1$ only when $q_1$ and $q_2$ are both nonempty, and $I_2(t)=1$ only if $q_3$ is nonempty. 
Note that \cite{jiang-spn} is the first to identify such no-underflow constraints and propose explicit solution to the queue underflow problems for the context of a processing network. 
Subject to (\ref{eq:activation-feasible}), we can then write the amount of arrivals into $q_1, q_2, q_3$, and the service rates of the queues at time $t$ as functions of the network state $S(t)$ and the action $x(t)$, i.e., 
\begin{eqnarray}
A_j(t) &=& A_j(S(t), x(t)) = D_j(t)R_j(t), \,\,j=1, 2,\nonumber\\
A_3(t) &=& A_3(S(t), x(t)) = I_1(t). \label{eq:arrivalrate-example}\\
\mu_j(t) &=& \mu_j(S(t), x(t)) = I_1(t), \,\, j=1, 2,\nonumber\\
\mu_3(t) &=& \mu_3(S(t), x(t)) = I_2(t).  \label{eq:servicerate-example}
\end{eqnarray}
Then we see that the queues evolve according to the following:
\begin{eqnarray}
q_j(t+1) &=& q_j(t) - \mu_j(t) +A_j(t), \,\, j=1, 2, 3,\,\,\forall\,t. \label{eq:queueing-example}
\end{eqnarray}
The \emph{instantaneous utility} is given by:
\begin{eqnarray}
f(t) &=&f(S(t), x(t)) \nonumber\\
&=& p(t)I_2(t)-D_1(t)R_1(t)-D_2(t)R_2(t).  \label{eq:utility-example}
\end{eqnarray}
The goal is to maximize the time average value of $f(t)$ subject to network stability. 

Note that the constraint (\ref{eq:activation-feasible}) greatly complicates the design of an optimal scheduling algorithm. This is because the decision made at time $t$ may affect the queue states in future time slots, which can in turn affect the set of possible actions in the future. In the following, we will develop the Perturbed Max-Weight algorithm (PMW) for this example. The idea of PMW is use the usual Max-Weight algorithm, but to perturb the weights so as to push the queue sizes towards certain nonzero values. By carefully designing the perturbation, we can simultaneously ensure that the queues always have enough data for processing and the achieved utility is close to optimal.

\subsection{The Perturbed Max-Weight algorithm (PMW)}
We now present the construction of the  PMW algorithm for this simple example (this is extended to general network models in Section \ref{section:pmw}). 
To start, we first define a \emph{perturbation vector} $\bv{\theta}=(\theta_1, \theta_2, \theta_3)^T$ and the Lyapunov function  
$L(t)=\frac{1}{2}\sum_{j=1}^3[q_j(t)-\theta_j]^2$. 
We then define the one-slot conditional drift as:
\begin{eqnarray}
\Delta(t) = \expect{L(t+1)-L(t)\left.|\right. \bv{q}(t)}, \label{eq:drift-example}
\end{eqnarray}
where the expectation is taken over the random network state $S(t)$ and the randomness over the actions. 
Using the queueing dynamics (\ref{eq:queueing-example}), it is easy to obtain that:
\begin{eqnarray*}
\Delta(t) \leq B - \sum_{j=1}^{3}\expect{(q_j(t)-\theta_j) [\mu_j(t)-A_j(t)]\left.|\right. \bv{q}(t)}, 
\end{eqnarray*}
where $B=3$. 
Now we use the ``drift-plus-penalty'' approach in \cite{neelynowbook} to design our  algorithm for this problem. To do so, we define a control parameter $V\geq1$, which will affect our utility-backlog tradeoff, and add to both sides the term $-V\expect{f(t)\left.|\right. \bv{q}(t)}$ to get: 
\begin{eqnarray}
\hspace{-.3in}&&\Delta(t) -V\expect{f(t)\left.|\right. \bv{q}(t)}   \label{eq:drift-example}\\
\hspace{-.3in}&&\qquad\qquad   \leq B-V\expect{f(t)\left.|\right. \bv{q}(t)} \nonumber\\
\hspace{-.3in}&&\qquad\qquad\qquad\quad- \sum_{j=1}^{3}\expect{(q_j(t)-\theta_j) [\mu_j(t)-A_j(t)]\left.|\right. \bv{q}(t)} . \nonumber
\end{eqnarray}
Denote $\Delta_V(t)=\Delta(t) -V\expect{f(t)\left.|\right. \bv{q}(t)}$, and plug   (\ref{eq:arrivalrate-example}), (\ref{eq:servicerate-example}) and (\ref{eq:utility-example}) into the above, to get: 
\begin{eqnarray}
\hspace{-.3in}&&\quad\Delta_V(t) \leq B+ \expect{D_1(t)R_1(t)[q_1(t)-\theta_1+V]\left.|\right. \bv{q}(t)}  \label{eq:drift2-example}\\
\hspace{-.3in}&&\qquad\qquad\qquad + \expect{D_2(t)R_2(t)[q_2(t)-\theta_2+V]\left.|\right. \bv{q}(t)}\nonumber\\
\hspace{-.3in}&&\qquad\qquad\qquad- \expect{I_2(t)[q_3(t)-\theta_3+p(t)V]\left.|\right. \bv{q}(t)}\nonumber\\
\hspace{-.3in}&&\qquad - \expect{I_1(t)[q_1(t)-\theta_1+q_2(t)-\theta_2-(q_3(t)-\theta_3)]\left.|\right. \bv{q}(t)}.\nonumber
\end{eqnarray}
We now develop our PMW algorithm by choosing an action at every time slot to \emph{minimize the right-hand side (RHS) of (\ref{eq:drift2-example})} subject to (\ref{eq:activation-feasible}). The algorithm then works as follows:

\underline{\emph{PMW:}} At every time slot, observe $S(t)$ and $\bv{q}(t)$, and do the following:
\begin{enumerate}
\item \underline{Data Admission:} Choose $D_j(t)=1$, i.e., admit the new arrivals to $q_j$ if:
\begin{eqnarray}
q_j(t)-\theta_j+V<0,\,\,j=1,2, \label{eq:admit-example}
\end{eqnarray}
else set $D_j(t)=0$ and reject the arrivals. 
\item \underline{Processor Activation:} Choose $I_1(t)=1$, i.e., activate processor $P_1$, if $q_1(t)\geq1$, $q_2(t)\geq1$, and that:
\begin{eqnarray}
q_1(t)-\theta_1+q_2(t)-\theta_2-(q_3(t)-\theta_3)>0,\label{eq:activateP1-example}
\end{eqnarray}
else choose $I_1(t)=0$. Similarly, choose $I_2(t)=1$, i.e., activate processor $P_2$, if $q_3(t)\geq1$, and that:
\begin{eqnarray}
q_3(t)-\theta_3+p(t)V>0, \label{eq:activateP2-example}
\end{eqnarray}
else choose $I_2(t)=0$. 
\item \underline{Queueing update:} Update $q_j(t),\,\forall\, j$, according to (\ref{eq:queueing-example}). 
\end{enumerate}

\vspace{-.0in}
\subsection{Performance of PMW}
Here we analyze the performance of PMW. We will first 
prove the following important claim: \emph{under a proper $\bv{\theta}$ vector, PMW minimizes the RHS of (\ref{eq:drift2-example}) over all possible policies of arrival admission and processor activation, including those that choose actions regardless of the constraint (\ref{eq:activation-feasible})}. 
We then use this claim to prove the performance of PMW, by comparing the value of the RHS of (\ref{eq:drift2-example}) under PMW versus that under an alternate policy. 


To prove the claim, we first see that the policy that minimizes the RHS of (\ref{eq:drift2-example}) without the constraint (\ref{eq:activation-feasible})  differs from PMW only in the processor activation part, where PMW also considers the constraints $q_1(t)\geq1$, $q_2(t)\geq1$ and $q_3(t)\geq1$. Thus if one can show that these  constraints are indeed redundant in the PMW algorithm under a proper $\bv{\theta}$ vector, i.e., one can activate the processors without considering them but still ensure them, then PMW minimizes the RHS of (\ref{eq:drift2-example}) over all possible policies.  
In the following, we will use the following $\theta_j$ values: 
\begin{eqnarray}
\theta_1=2V,\,\, \theta_2=2V,\,\, \theta_3=3V. \label{eq:thetavalue-example}
\end{eqnarray}

Let us now look at the queue sizes $q_j(t), j=1,2,3$.
From (\ref{eq:activateP2-example}), we see that $P_2$ is activated if and only if: 
\begin{eqnarray}
q_3(t) \geq \theta_3 - p(t)V+1, \quad\text{and}\quad q_3(t)\geq1. \label{eq:q3bound-example}
\end{eqnarray}
Hence $I_2(t)=1$ whenever $q_3(t)\geq\theta_3-V+1$, but $I_2(t)=0$ unless $q_3(t)\geq\theta_3-3V+1$. Since $q_3$ can receive and deliver at most one unit of data at a time, we get: 
\begin{eqnarray}
\theta_3-V+1\geq q_3(t)\geq \theta_3-3V, \,\,\forall\,\,t. \label{eq:q3bound-ex-new}
\end{eqnarray}
Using $\theta_3=3V$, this implies:
\begin{eqnarray}
2V+1\geq q_3(t) \geq 0,\quad\forall\,\,t. \label{eq:q3bound-final}
\end{eqnarray}
This shows that with $\theta_3=3V$, the activations of $P_2$ are always feasible even if we do not consider the constraint $q_3(t)\geq1$. 

We now look at $q_1(t)$ and $q_2(t)$. We see from (\ref{eq:admit-example}) that for $\theta_1, \theta_2\geq V$, we have:
\begin{eqnarray}
q_j(t) \leq \theta_j -V , \,\, j=1,2.  \label{eq:q1q2bound-example}
\end{eqnarray}
Also, using (\ref{eq:activateP1-example}) and (\ref{eq:q3bound-ex-new}), it is easy to see that when $I_1(t)=1$, i.e., when $P_1$ is turned on, we have: 
\begin{eqnarray}
q_1(t)-\theta_1 + q_2(t)-\theta_2 > q_3(t)-\theta_3 \geq -3V. \label{eq:foo}
\end{eqnarray}
Combining (\ref{eq:foo}) with (\ref{eq:q1q2bound-example}), we see that if $I_1(t)=1$, we have: 
\begin{eqnarray}
q_j(t)\geq 1, \,\, j=1, 2. \label{eq:q12bound-final}
\end{eqnarray}
This is so because, e.g., if $q_1(t)=0$, then $q_1(t)-\theta_1 = -\theta_1=-2V$. Since  $q_2(t)-\theta_2\leq -V$ by (\ref{eq:q1q2bound-example}), we thus have: 
\begin{eqnarray*}
q_1(t)-\theta_1 + q_2(t)-\theta_2 \leq -2V-V = -3V,
\end{eqnarray*}
which cannot be greater than $-3V$ in (\ref{eq:foo}). Thus by (\ref{eq:q3bound-final}) and (\ref{eq:q12bound-final}),  we have:  
\begin{eqnarray}
q_j(t)\geq0,\quad j=1, 2, 3,\, \forall\,\, t. \label{eq:nounderflow}
\end{eqnarray}
This shows that by using the $\theta_j$ values in (\ref{eq:thetavalue-example}), PMW automatically ensures that no queue underflow happens, and hence PMW minimizes the RHS of (\ref{eq:drift2-example}) over all possible policies. 

Given the above observation, the utility performance of PMW can now be analyzed as the usual Max-Weight algorithm. 
Specifically, using a similar argument as in \cite{neelyhuang_assembly}, we can  compare the drift under PMW with a stationary randomized algorithm which chooses scheduling actions purely as a function of $S(t)$, and achieves $\expect{\mu_j(t)-A_j(t)\left.|\right.\bv{q}(t)}=0$ for all $j$ and $\expect{f(t)\left.|\right.\bv{q}(t)}=f^*_{av}=\frac{1}{2}$, where $f^*_{av}$ is the optimal average utility. Note that \emph{this comparison will not have been possible here without using the perturbation to ensure (\ref{eq:nounderflow})}. 
Now plugging this  policy into (\ref{eq:drift-example}), we obtain:
\begin{eqnarray}
\hspace{-.3in}&&\Delta(t) -V\expect{f(t)\left.|\right. \bv{q}(t)}    \leq B-Vf^*_{av}. 
\end{eqnarray}
Taking expectations over $\bv{q}(t)$ on both sides and summing it over $t=0, 1, ..., T-1$, we get:
\begin{eqnarray}
\hspace{-.4in}&&\expect{L(T)-L(0)} - V\sum_{t=0}^{T-1}\expect{f(t)}    \leq TB-VTf^*_{av}. 
\end{eqnarray}
Now rearranging the terms, dividing both sides by $VT$,  and using the fact that $L(t)\geq0$, we get:
\begin{eqnarray}
\hspace{-.3in}&&\frac{1}{T}\sum_{t=0}^{T-1}\expect{f(t)}  \geq f^*_{av}-\frac{B}{V} -\frac{\expect{L(0)}}{TV}. 
\end{eqnarray}
Taking a liminf as $T\rightarrow\infty$, and using $\expect{L(0)}<\infty$, we get: 
\begin{eqnarray}
\hspace{-.3in}&&f^{PMW}_{av}=\liminf_{T\rightarrow\infty}\frac{1}{T}\sum_{t=0}^{T-1}\expect{f(t)}  \geq f^*_{av}-\frac{B}{V}, 
\end{eqnarray}
where $f^{PMW}_{av}$ denotes the time average utility achieved by PMW. 
This thus shows that PMW is able to achieve a time average utility that is within $O(1/V)$ of the optimal value, and guarantees $q_j(t)\leq O(V)$ for all time. Note that PMW is similar to the DMW algorithm developed in \cite{jiang-spn}. However,  DMW allows the queues to be empty when activating processors, which may lead to ``deficit,'' whereas PMW effectively avoids this by using a perturbation vector. 


In the following, we will present the general processing network utility optimization model, and  analyze the performance of the general PMW algorithm under this general model. Our analysis uses a duality argument, and will be different  from that in \cite{neelyhuang_assembly}. As we will see, our approach allows one to analyze the algorithm performance without proving the existence of an optimal stationary and randomized algorithm. 





$\vspace{-.2in}$
\section{General System Model}\label{section:model}
In this section, we present the general network model. We consider a network controller that operates a general network with the goal of maximizing the time average utility, subject to the network stability. 
The network is assumed to operate in slotted time, i.e., $t\in\{0,1,2,...\}$. We assume there are $r\geq1$ queues in the network. 

%

$\vspace{-.2in}$
\subsection{Network State}
In every slot $t$, we use $S(t)$ to denote the current network state, which indicates the current network parameters, such as a vector of channel conditions for each link, or a collection of other relevant information about the current network links and arrivals. 
We assume that $S(t)$ is i.i.d. every time slot, with a total of $M$ different random network states denoted by $\script{S} = \{s_1, s_2, \ldots, s_M\}$. \footnote{Note that all our results can easily be extended to the case when $S(t)$ evolves according to a finite state aperiodic and irreducible Markov chain, by using the results developed in \cite{huangneely_qlamarkovian}.} We let $\pi_{s_i}=\prob\{S(t)=s_i\}$.  
The network controller can observe $S(t)$ at the beginning of every slot $t$, but the $\pi_{s_i}$ probabilities are not necessarily known. 

$\vspace{-.22in}$
\subsection{The Utility, Traffic, and Service}\label{subsection:costtrafficservice}
At each time $t$, after observing $S(t)=s_i$ and the network backlog vector, the controller will perform an action $x(t)$. This action represents the aggregate decisions made by the controller at time $t$, which can include, e.g., in the previous example, the set of processors to turn on, or the amount of arriving contents to accept, or both, etc. 

We denote $\script{X}^{(s_i)}$  the set of all feasible actions for network state $s_i$, \emph{assuming all the queues contain enough contents to meet the scheduling requirements}. Note that we always have $x(t)= x^{(s_i)}$ for some $x^{(s_i)}\in\script{X}^{(s_i)}$ whenever  $S(t)=s_i$. The set $\script{X}^{(s_i)}$ is assumed to be time-invariant and compact for all $s_i\in\script{S}$. If the chosen action $x(t)=x^{(s_i)}$ at time $t$ can be performed, i.e., it is feasible and all the queues have enough contents, then the utility, traffic, and service generated by $x(t)$ are as follows:
\begin{enumerate}
\item[(a)] The chosen action has an associated utility given by the utility function $f(t)=f(s_i, x^{(s_i)}): \script{X}^{(s_i)}\mapsto \mathbb{R}$;

\item[(b)] The amount of contents generated by the action to queue $j$ is determined by the traffic function $A_j(t)=A_{j}(s_i, x^{(s_i)}): \script{X}^{(s_i)}\mapsto \mathbb{R}_{+}$, in units of contents; 

\item[(c)] The amount of contents consumed from queue $j$ by the action is given by the rate function $\mu_j(t)=\mu_{j}(s_i, x^{(s_i)}): \script{X}^{(s_i)}\mapsto \mathbb{R}_{+}$, in units of contents;
 \end{enumerate}
Note that $A_j(t)$ includes both the exogenous arrivals from outside the network to queue $j$, and the endogenous arrivals from other queues, i.e., the newly generated contents by processing contents in  some other queues, to queue $j$. We assume the functions $f(s_i, \cdot)$, $\mu_{j}(s_i, \cdot)$ and $A_{j}(s_i, \cdot)$ are continuous, time-invariant, their magnitudes are uniformly upper bounded by some constant $\delta_{max}\in(0,\infty)$ for all $s_i$, $j$, and they are known to the network operator. 



In any actual algorithm implementation, however, we see that not all actions in the set $\script{X}^{(s_i)}$ can be performed when $S(t)=s_i$, due to the fact that some queues may not have enough contents for the action. We say that an action  $x^{(s_i)}\in\script{X}^{(s_i)}$ is feasible at time $t$ with $S(t)=s_i$ only when the following general \emph{no-underflow constraint} is satisfied:
\begin{eqnarray}
q_j(t)\geq \mu_j(s_i, x^{(s_i)}),\,\,\,\forall\,\, j.\label{eq:action-feasible-cond}
\end{eqnarray}
That is, \emph{all the queues must have contents greater than or equal to what will be consumed}. In the following, 
we  assume there exists a set of actions $\{x^{(s_i)}_k\}_{i=1,..., M}^{k=1,2, ..., r+2}$ with $x^{(s_i)}_k\in\script{X}^{(s_i)}$ and some variables $\vartheta^{(s_i)}_k\geq0$ for all $s_i$ and $k$ with $\sum_{k=1}^{r+2}\vartheta^{(s_i)}_k=1$ for all $s_i$, such that:  
\begin{eqnarray}
\sum_{s_i}\pi_{s_i}\big\{\sum_{k=1}^{r+2}\vartheta^{(s_i)}_k[A_{j}(s_i, x^{(s_i)}_k)-\mu_{j}(s_i, x^{(s_i)}_k)]\big\}\leq -\eta,\label{eq:slackness}
\end{eqnarray}
for some $\eta>0$ for all $j$. That is, the ``stability 
constraints'' are feasible with $\eta$-slackness. \footnote{The use of $r+2$ actions here is due to the use of Caratheodory's theorem \cite{bertsekasoptbook} in the proof of Theorem \ref{theorem:optutility}. }
In the following, we use:
\begin{eqnarray}
\bv{A}(t)=(A_1(t), ..., A_r(t))^{T}, \,\,
\bv{\mu}(t)=(\mu_1(t), ..., \mu_r(t))^{T},\label{eq:arrivalservicevector}
\end{eqnarray}
%
to denote the arrival and service vectors at time $t$. 




$\vspace{-.2in}$
\subsection{Queueing, Average Cost, and the Objective }\label{section:queuenotation}
Let $\bv{q}(t)=(q_1(t), ..., q_r(t))^T\in\mathbb{R}^r_{+}$, $t=0, 1, 2, ...$ be the queue backlog vector  process of the network, in units of contents. Due to the feasibility condition (\ref{eq:action-feasible-cond}) of the actions, we see that the queues evolve according to the following dynamics: 
\vspace{-.08in}
\begin{eqnarray}
q_j(t+1)=q_j(t)-\mu_j(t)+A_j(t),\quad\forall j,\,t\geq0,  \label{eq:queuedynamic}
\end{eqnarray}
with some $||\bv{q}(0)||<\infty$. Note that using a nonzero $q_j(0)$ can be viewed as placing an ``initial stock'' in the queues to facilitate algorithm implementation. 
In this paper, we adopt the following notion of queue stability:
\begin{eqnarray}
\overline{q}\triangleq
\limsup_{t\rightarrow\infty}\frac{1}{t}\sum_{\tau=0}^{t-1}\sum_{j=1}^{r}\expect{q_j(\tau)}<\infty.\label{eq:queuestable}
\end{eqnarray}
We also use $f^{\Pi}_{av}$ to denote the time average utility induced by an action-choosing policy $\Pi$, defined as:
\begin{eqnarray}
f^{\Pi}_{av}\triangleq
\liminf_{t\rightarrow\infty}\frac{1}{t}\sum_{\tau=0}^{t-1}\expect{f^{\Pi}(\tau)},\label{eq:timeavcost}
\end{eqnarray}
where $f^{\Pi}(\tau)$ is the utility incurred at time $\tau$ by policy $\Pi$. We call an action-choosing  policy \emph{feasible} if at every time slot $t$ it only chooses actions from the feasible action set $\script{X}^{(S(t))}$ that satisfy (\ref{eq:action-feasible-cond}).  We then call a feasible action-choosing  policy under which (\ref{eq:queuestable}) holds a \emph{stable} policy, and use $f_{av}^*$ to denote the optimal time average utility over all stable policies. 

In every slot, the network controller observes the current network state and the queue backlog vector, and chooses a feasible control action that ensures (\ref{eq:action-feasible-cond}), with the objective of maximizing the time average utility subject to network stability. 
Note that if condition (\ref{eq:action-feasible-cond}) can be ignored, and if any processor only requires contents from a single queue, then this problem falls into the general stochastic network optimization framework considered in \cite{neelynowbook}, in which case it can be solved by using the usual Max-Weight algorithm to achieve a utility that is within $O(1/V)$ of the optimal while ensuring that the average network backlog is $O(V)$.

\vspace{-.0in}
\section{Upper bounding the optimal utility}\label{section:utilitybound}
In this section, we first obtain an upper bound of the optimal utility that the network controller can achieve. This upper bound will later be used to analyze the performance of our algorithm. The result is summarized in the following theorem. 

\begin{theorem} \label{theorem:optutility}
Suppose the initial queue backlog $\bv{q}(t)$ satisfies $\expect{q_j(0)}<\infty$ for all $j=1, ..., r$. Then we have: 
\begin{eqnarray}
Vf_{av}^*\leq \phi^*,
\end{eqnarray}
where $\phi^*$ is the optimal value of the following problem:
\vspace{-.08in}
\begin{eqnarray}
\hspace{-.3in}&& \max: \quad \phi=\sum_{s_i}\pi_{s_i} V\sum_{k=1}^{r+2}a^{(s_i)}_kf(s_i, x^{(s_i)}_k)\label{eq:primal_obj}\\
\hspace{-.3in}&&\quad s.t. \quad \sum_{s_i}\pi_{s_i}\sum_{k=1}^{r+2}a^{(s_i)}_kA_j(s_i, x^{(s_i)}_k)\label{eq:primal_ratecond}\\
\hspace{-.3in}&&  \qquad\qquad\qquad\qquad =  \sum_{s_i}\pi_{s_i}\sum_{k=1}^{r+2}a^{(s_i)}_k\mu_j(s_i, x^{(s_i)}_k)\nonumber\\
\hspace{-.3in}&& \qquad\qquad x^{(s_i)}_k\in\script{X}^{(s_i)},\forall\,s_i, k\label{eq:primal_actioncond}\\
\hspace{-.3in}&& \qquad\qquad a^{(s_i)}_k\geq0, \forall\, s_i, k, \sum_{k}a^{(s_i)}_k=1,\forall\, s_i. \label{eq:primal_probcond}
\end{eqnarray}
\end{theorem}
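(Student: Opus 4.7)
The plan is a primal approach: given any stable feasible policy $\Pi$ with time-average utility $f_{av}^\Pi$, I construct a feasible point of the optimization (\ref{eq:primal_obj})--(\ref{eq:primal_probcond}) whose objective equals $Vf_{av}^\Pi$. Taking the supremum over stable $\Pi$ then gives $Vf_{av}^*\le\phi^*$.

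For each horizon $T$, let $\bar{\nu}_{s_i}^{(T)}$ be the time-averaged conditional distribution of $x^\Pi(t)$ given $S(t)=s_i$, a probability measure on the compact set $\script{X}^{(s_i)}$. Writing $\bar{f}_T$, $\bar{A}_{j,T}$, $\bar{\mu}_{j,T}$ for the time-averaged expected utility, arrival rate, and service rate up to $T$, each admits the representation $\bar{f}_T=\sum_{s_i}\pi_{s_i}\int f(s_i,x)\,d\bar{\nu}_{s_i}^{(T)}(x)$ and analogously for $\bar{A}_{j,T},\bar{\mu}_{j,T}$. Telescoping (\ref{eq:queuedynamic}) yields the key identity $\bar{A}_{j,T}-\bar{\mu}_{j,T}=(\mathbb{E}[q_j(T)]-\mathbb{E}[q_j(0)])/T$. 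By Prokhorov's theorem on the compact spaces $\script{X}^{(s_i)}$ together with a diagonal extraction that also realizes the $\liminf$ of $\bar{f}_T$, I pass to a subsequence $T_n\to\infty$ along which $\bar{\nu}_{s_i}^{(T_n)}\to\nu_{s_i}^*$ weakly for every $s_i$ and $\bar{f}_{T_n}\to f_{av}^\Pi$; continuity and uniform boundedness of $f,A_j,\mu_j$ then transfer this to convergence of all rates to natural limits $\bar{A}_j^*,\bar{\mu}_j^*$ associated with $\nu^*$.

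The central obstacle is to show $\bar{A}_j^*=\bar{\mu}_j^*$ for every $j$, which by the telescoping identity is equivalent to $\mathbb{E}[q_j(T_n)]/T_n\to 0$. Here I exploit the bounded-increment property $|q_j(t+1)-q_j(t)|\le\delta_{max}$ that follows from the uniform $\delta_{max}$ bound on $A_j,\mu_j$: if $\mathbb{E}[q_j(T_n)]\ge cT_n$ held along some sub-subsequence for a fixed $c>0$, the Lipschitz-in-time lower bound $\mathbb{E}[q_j(t)]\ge cT_n-(T_n-t)\delta_{max}$ would force $\mathbb{E}[q_j(t)]\ge cT_n/2$ on a segment of length $\Theta(T_n)$ just below $T_n$, making $\frac{1}{T_n}\sum_{t<T_n}\mathbb{E}[q_j(t)]$ grow without bound and contradicting the stability hypothesis (\ref{eq:queuestable}). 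Converting stability of a time-average into a bound on the value at a specific time $T_n$ is the delicate point I expect.

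Finally, Caratheodory's theorem in $\mathbb{R}^{r+1}$ (one coordinate for the utility $f$ and $r$ for the drifts $A_j-\mu_j$, explaining the count $r+2=(r+1)+1$) applied state by state represents, for each $s_i$, the expectation under $\nu_{s_i}^*$ of the map $x\mapsto(f(s_i,x),A_1(s_i,x)-\mu_1(s_i,x),\ldots,A_r(s_i,x)-\mu_r(s_i,x))$ as a convex combination of at most $r+2$ of its own values. This yields coefficients $a_k^{(s_i)}\ge 0$ with $\sum_k a_k^{(s_i)}=1$ and actions $x_k^{(s_i)}\in\script{X}^{(s_i)}$ matching both the utility integral and the drift integrals at the limit policy $\nu_{s_i}^*$. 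Averaging against $\pi_{s_i}$ and multiplying by $V$, the resulting $(a_k^{(s_i)},x_k^{(s_i)})$ satisfies the equality constraint (\ref{eq:primal_ratecond}) and attains the objective value $Vf_{av}^\Pi$, so $Vf_{av}^\Pi\le\phi^*$ as desired.
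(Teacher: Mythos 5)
Your proposal is correct and follows the same skeleton as the paper's proof: express the time averages of a stable policy $\Pi$ state-by-state, use compactness of $\script{X}^{(s_i)}$ and Caratheodory in $\mathbb{R}^{r+1}$ (hence $r+2$ points) to produce a feasible point of (\ref{eq:primal_obj})--(\ref{eq:primal_probcond}) with objective $Vf^{\Pi}_{av}$, and conclude $Vf^{\Pi}_{av}\leq\phi^*$. The differences are in execution. First, you carry the full occupation measures $\bar{\nu}^{(T)}_{s_i}$ to a weak limit via Prokhorov and apply Caratheodory once at the limit, whereas the paper applies Caratheodory at each finite horizon and then extracts convergent sub-subsequences of the finitely many actions and coefficients; the two are equivalent, and the paper's order of operations avoids any measure-theoretic machinery. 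Second, and more substantively, you treat the rate-balance step differently. The paper splits $\epsilon_j=0$ into two one-sided inequalities: $\epsilon_j\leq 0$ is outsourced to a citation (stability plus $\expect{q_j(0)}<\infty$ implies arrival rate at most service rate), and $\epsilon_j\geq 0$ follows from the no-underflow constraint (\ref{eq:action-feasible-cond}), which forces cumulative departures to never exceed cumulative arrivals plus the initial backlog. You instead derive both directions from the single telescoped identity $\bar{A}_{j,T}-\bar{\mu}_{j,T}=(\expect{q_j(T)}-\expect{q_j(0)})/T$: nonnegativity of $q_j(T)$ (which is exactly what the no-underflow constraint guarantees) gives the lower bound, and your bounded-increment argument --- if $\expect{q_j(T_n)}\geq cT_n$ then $\expect{q_j(t)}\geq cT_n/2$ on a window of length $cT_n/(2\delta_{max})$ below $T_n$, so the horizon-$T_n$ average backlog grows like $c^2T_n/(4\delta_{max})$, contradicting (\ref{eq:queuestable}) --- gives a self-contained proof of the step the paper cites from \cite{neelythesis}. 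You correctly identified this conversion of an averaged stability bound into pointwise control of $\expect{q_j(T_n)}/T_n$ as the delicate point; your Lipschitz-in-time resolution of it is sound, and your proposal is if anything more complete than the paper's own argument.
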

\begin{proof}
See Appendix A. 
\end{proof}
Note that the problem (\ref{eq:primal_obj}) only requires that the time average input rate into a queue is equal to its time average output rate. This requirement ignores the action feasibility constraint (\ref{eq:action-feasible-cond}), and makes (\ref{eq:primal_obj}) easier to solve than the scheduling problem. 
We now look at the dual problem of the problem (\ref{eq:primal_obj}). The following lemma shows that the dual problem  of (\ref{eq:primal_obj}) does not have to include the variables $\{a^{(s_i)}_k\}_{i=1, ..., M}^{k=1, ..., r+2}$. 
This lemma will also be useful for our later analysis. 
\begin{lemma}\label{lemma:dualproblem}
The dual problem of (\ref{eq:primal_obj}) is given by: 
\begin{eqnarray}
\min: \quad g(\bv{\gamma}),\,\,\, s.t.\quad \bv{\gamma}\in\mathbb{R}^r, \label{eq:dual-problem}
\end{eqnarray}
where the function $g(\bv{\gamma})$ is defined: 
\begin{eqnarray}
\hspace{-.3in}&&g(\bv{\gamma}) = \sup_{x^{(s_i)}\in\script{X}^{(s_i)}}\sum_{s_i}\pi_{s_i}\bigg\{ Vf(s_i, x^{(s_i)}) \label{eq:dual-function}\\
\hspace{-.3in}&&\qquad\qquad\quad -\sum_{j}\gamma_{j}\big[ A_j(s_i, x^{(s_i)})-\mu_j(s_i, x^{(s_i)})\big] \bigg\}. \nonumber
\end{eqnarray}
Moreover, let $\bv{\gamma}^*$ be any optimal solution of  (\ref{eq:dual-problem}), we have:
\begin{eqnarray}
g(\bv{\gamma}^*) \geq \phi^*.\label{eq:nodualitygap}
\end{eqnarray}

\end{lemma}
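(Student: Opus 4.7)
The plan is to form the standard Lagrangian dual of the program (\ref{eq:primal_obj})--(\ref{eq:primal_probcond}), relaxing only the rate-balance equalities (\ref{eq:primal_ratecond}) with multipliers $\gamma_j\in\mathbb{R}$ (unsigned because these are equality constraints), while keeping the simplex condition (\ref{eq:primal_probcond}) on $\{a_k^{(s_i)}\}$ and the feasibility condition (\ref{eq:primal_actioncond}) on $\{x_k^{(s_i)}\}$ inside the inner maximization. This produces the Lagrangian
\begin{align*}
L(\{a,x\},\bv{\gamma})=\sum_{s_i}\pi_{s_i}\sum_{k=1}^{r+2}a_k^{(s_i)}\Big\{Vf(s_i,x_k^{(s_i)})-\sum_j\gamma_j\big[A_j(s_i,x_k^{(s_i)})-\mu_j(s_i,x_k^{(s_i)})\big]\Big\},
\end{align*}
and the dual function is $g(\bv{\gamma})=\sup L$ with the supremum taken jointly over $x_k^{(s_i)}\in\script{X}^{(s_i)}$ and over probability vectors $(a_k^{(s_i)})_k$ for each $s_i$.

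The second step is to eliminate the variables $\{a_k^{(s_i)}\}$ and the redundancy in the index $k$ so as to recover the form (\ref{eq:dual-function}). For fixed $\bv{\gamma}$ the Lagrangian decouples across network states, and within each $s_i$-block the $a_k^{(s_i)}$'s form a convex combination of $r+2$ brackets, each of which depends on its own free action $x_k^{(s_i)}\in\script{X}^{(s_i)}$. A convex combination is maximized by putting all weight on the largest component, so an optimal choice is to set $a_{k^*}^{(s_i)}=1$ for some $k^*$ with $x_{k^*}^{(s_i)}$ a maximizer of the bracket over $\script{X}^{(s_i)}$; such a maximizer exists because $\script{X}^{(s_i)}$ is compact and $f,A_j,\mu_j$ are continuous in $x$. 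This collapses the per-$s_i$ contribution to $\sup_{x\in\script{X}^{(s_i)}}\{Vf(s_i,x)-\sum_j\gamma_j(A_j(s_i,x)-\mu_j(s_i,x))\}$, and since the $x^{(s_i)}$'s across different $s_i$ are independent variables, the supremum can be pulled outside the $\sum_{s_i}\pi_{s_i}$, yielding exactly (\ref{eq:dual-function}). The dual problem is therefore $\min_{\bv{\gamma}\in\mathbb{R}^r}g(\bv{\gamma})$, as in (\ref{eq:dual-problem}).

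Finally, the bound $g(\bv{\gamma}^*)\geq\phi^*$ is pure weak duality. For any primal-feasible $(\{a_k^{(s_i)}\},\{x_k^{(s_i)}\})$, the equality constraints (\ref{eq:primal_ratecond}) annihilate every $\gamma_j$ term in $L$, so the primal objective value $\phi$ equals $L(\{a,x\},\bv{\gamma})$ for every $\bv{\gamma}\in\mathbb{R}^r$; but $L(\{a,x\},\bv{\gamma})\leq g(\bv{\gamma})$ by the definition of the supremum. Taking the sup over primal-feasible points and then setting $\bv{\gamma}=\bv{\gamma}^*$ gives $\phi^*\leq g(\bv{\gamma}^*)$. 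I do not expect any serious obstacle: the only care points are (i) noting that $\bv{\gamma}$ is unconstrained in sign because the primal constraints are equalities, and (ii) justifying the interchange of the supremum with $\sum_{s_i}$, which is immediate from the separability of the Lagrangian across states. Notably, no strong-duality claim is made, so no convexity of $\script{X}^{(s_i)}$ or of the utility is needed.
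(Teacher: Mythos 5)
Your proposal is correct and follows essentially the same route as the paper: both relax only the equality constraints (\ref{eq:primal_ratecond}) with unsigned multipliers, then eliminate the $\{a_k^{(s_i)}\}$ variables by observing that the resulting convex combination is maximized at a single action per state (the paper phrases this as the two inequalities $\hat{g}\geq g$ and $g\geq\hat{g}$), and both obtain (\ref{eq:nodualitygap}) from weak duality. The only difference is that you spell out the weak-duality step explicitly while the paper simply cites it; no gap either way.
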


\begin{proof} (Lemma \ref{lemma:dualproblem}) 
It is easy to see from (\ref{eq:primal_obj}) that the dual function is given by:
\begin{eqnarray}
\hspace{-.3in}&&\hat{g}(\bv{\gamma}) = \sup_{x^{(s_i)}_k, a^{(s_i)}_k}\sum_{s_i}\pi_{s_i}\bigg\{ \sum_{k=1}^{r+2}a^{(s_i)}_kVf(s_i, x^{(s_i)}_k) \label{eq:dual-function-prob}\\
\hspace{-.3in}&&\qquad\quad\quad -\sum_{j}\gamma_{j}\sum_{k=1}^{r+2}a^{(s_i)}_k\big[A_j(s_i, x^{(s_i)}_k)-\mu_j(s_i, x^{(s_i)}_k)\big] \bigg\}. \nonumber
\end{eqnarray}
Due to the use of the $\{a^{(s_i)}_k\}_{i=1, ..., M}^{k=1, ..., r+2}$ variables,  it is easy to see that $\hat{g}(\bv{\gamma})\geq g(\bv{\gamma})$. However, if $\{x^{(s_i)}\}_{i=1}^M$ is a set of maximizers of $g(\bv{\gamma})$, then the set of variables $\{x^{(s_i)}_k, a^{(s_i)}_k\}_{i=1, ..., M}^{k=1, ..., r+2}$ where for each $s_i$,  $x^{(s_i)}_k=x^{(s_i)}$ for all $k$, and $a^{(s_i)}_1=1$ with $a^{(s_i)}_k=0$ for all $k\geq2$, will also be maximizers of $\hat{g}(\bv{\gamma})$. Thus $g(\bv{\gamma})\geq \hat{g}(\bv{\gamma})$. 
This shows that $g(\bv{\gamma})=\hat{g}(\bv{\gamma})$, and hence $g(\bv{\gamma})$ is the dual function of (\ref{eq:primal_obj}). 
(\ref{eq:nodualitygap}) follows from weak duality  \cite{bertsekasoptbook}. 
\end{proof}

In the following, it is useful to define the following function:
\begin{eqnarray}
\hspace{-.3in}&&g_{s_i}(\bv{\gamma}) = \sup_{x^{(s_i)}\in\script{X}^{(s_i)}} \bigg\{ Vf(s_i, x^{(s_i)}) \label{eq:dual-function-state}\\
\hspace{-.3in}&&\qquad\qquad\qquad -\sum_{j}\gamma_{j}\big[ A_j(s_i, x^{(s_i)})-\mu_j(s_i, x^{(s_i)})\big] \bigg\}. \nonumber
\end{eqnarray}
That is, $g_{s_i}(\bv{\gamma})$ is the dual function of (\ref{eq:primal_obj}) when there is a single network state $s_i$. 
We can see from (\ref{eq:dual-function}) and (\ref{eq:dual-function-state}) that:
\begin{eqnarray}
g(\bv{\gamma}) = \sum_{s_i}\pi_{s_i} g_{s_i}(\bv{\gamma}). \label{eq:separable}
\end{eqnarray}
In the following, we will use $\bv{\gamma}^*=(\gamma_1^*, ..., \gamma_r^*)^T$ to denote an optimal solution of the problem (\ref{eq:dual-problem}).

\section{The perturbed max-weight algorithm and its performance}\label{section:pmw}
In this section, we develop the general Perturbed Max-Weight algorithm (PMW) to solve our scheduling problem. 
To start, we first choose a \emph{perturbation vector} $\bv{\theta}=(\theta_1, ..., \theta_r)^T$. Then we 
define the following weighted perturbed Lyapunov function with some positive constants $\{w_j\}_{j=1}^r$:
\begin{eqnarray}
L(t)=\frac{1}{2}\sum_{j=1}^r w_j\big(q_j(t) -\theta_j\big)^2.\label{eq:lyapunov-function}
\end{eqnarray}
We then define the one-slot conditional drift as in (\ref{eq:drift-example}), i.e., 
$\Delta(t)=\expect{L(t+1)-L(t)\left.|\right. \bv{q}(t)}$. 
We will similarly use the ``drift-plus-penalty'' approach in Section \ref{section:datafusionexample} to construct the algorithm. 
Specifically, we first use the queueing dynamic equation (\ref{eq:queuedynamic}), and have the following lemma:
\begin{lemma}\label{lemma:drift-eq}
Under any feasible control policy that can be implemented at time $t$,  we have:
\begin{eqnarray}
\hspace{-.3in}&&\Delta(t)-V\expect{ f(t) \left.|\right. \bv{q}(t)}\leq B -V\expect{ f(t) \left.|\right. \bv{q}(t)}\label{eq:drift}\\
\hspace{-.3in}&&\qquad\qquad\qquad -\sum_{j=1}^rw_j\big(q_j(t)-\theta_j\big)\expect{[\mu_j(t)-A_j(t)]\left.|\right.\bv{q}(t)}, \nonumber
\end{eqnarray}
where $B=\delta^2_{max}\sum_{j=1}^rw_j$. 
\end{lemma}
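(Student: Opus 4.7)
The plan is to execute a standard Lyapunov drift computation using the perturbed quadratic Lyapunov function $L(t)$ defined in (\ref{eq:lyapunov-function}). The key observation that makes the calculation clean is that, because the control action satisfies the no-underflow constraint (\ref{eq:action-feasible-cond}) at time $t$, the queue update (\ref{eq:queuedynamic}) proceeds without any $[\,\cdot\,]^+$ truncation, so $q_j(t+1) - \theta_j = (q_j(t) - \theta_j) + (A_j(t) - \mu_j(t))$ exactly. This turns the drift into a simple algebraic identity rather than an inequality coming from the projection.

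First I would square both sides of the shifted queue recursion to obtain
\begin{equation*}
(q_j(t+1)-\theta_j)^2 - (q_j(t)-\theta_j)^2 = 2(q_j(t)-\theta_j)(A_j(t)-\mu_j(t)) + (A_j(t)-\mu_j(t))^2.
\end{equation*}
Multiplying by $w_j/2$ and summing over $j$ gives an exact expression for $L(t+1) - L(t)$. Then I would bound the second-order term using $(A_j(t)-\mu_j(t))^2 \leq A_j(t)^2 + \mu_j(t)^2 \leq 2\delta_{\max}^2$, which follows from the uniform magnitude bound on $A_j(\cdot)$ and $\mu_j(\cdot)$ stated in Section \ref{subsection:costtrafficservice}. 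This yields $\tfrac{1}{2}\sum_j w_j (A_j(t)-\mu_j(t))^2 \leq \delta_{\max}^2 \sum_j w_j = B$, producing the constant stated in the lemma.

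Next I would take the conditional expectation of the resulting inequality given $\bv{q}(t)$. Since $q_j(t)-\theta_j$ is measurable with respect to $\bv{q}(t)$, it pulls out of the expectation in the first-order term, yielding
\begin{equation*}
\Delta(t) \leq B + \sum_{j=1}^r w_j (q_j(t)-\theta_j)\,\mathbb{E}\{A_j(t) - \mu_j(t)\,|\,\bv{q}(t)\}.
\end{equation*}
Flipping the sign inside the sum gives the $-[\mu_j(t) - A_j(t)]$ form appearing in (\ref{eq:drift}). Finally I would subtract $V\mathbb{E}\{f(t)\,|\,\bv{q}(t)\}$ from both sides to arrive at the claimed bound.

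There is no real obstacle here; the argument is essentially a textbook drift calculation, and the only bookkeeping issue is being slightly generous in bounding $(A_j-\mu_j)^2$ by $2\delta_{\max}^2$ rather than $\delta_{\max}^2$ so that the constant $B$ comes out as stated. The important conceptual point, to be flagged in the proof, is that this clean equality for the shifted queue recursion only holds because we restricted attention to policies that are feasible at time $t$ in the sense of (\ref{eq:action-feasible-cond}); this is precisely the property that the perturbation $\bv{\theta}$ will later be chosen to guarantee for PMW.
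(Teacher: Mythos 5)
Your proposal is correct and follows essentially the same route as the paper's Appendix B: square the shifted queue recursion (exact, since feasibility rules out any truncation), bound the second-order term by $2\delta_{\max}^2$ to obtain $B=\delta_{\max}^2\sum_j w_j$, take conditional expectations, and subtract $V\expect{f(t)\left.|\right.\bv{q}(t)}$. No gaps.
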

\begin{proof}
See Appendix B. 
\end{proof}

The general Perturbed Max-Weight algorithm (PMW) is then obtained by choosing an action $x(t)$ from $\script{X}^{(S(t))}$ at  time  $t$ to \emph{minimize the right-hand side (RHS) of (\ref{eq:drift})} subject to (\ref{eq:action-feasible-cond}). 
Specifically, define the function $D_{\bv{\theta}, \bv{q}(t)}^{(s_i)}(x)$ as: 
\begin{eqnarray} 
\hspace{-.3in}&&D_{\bv{\theta}, \bv{q}(t)}^{(s_i)}(x)\label{eq:pmw-action-func}\\
\hspace{-.3in}&&\quad\triangleq Vf(s_i, x)+\sum_{j=1}^{r}w_j\big(q_j(t)-\theta_j\big)\big[\mu_j(s_i, x)-A_j(s_i, x)\big].  \nonumber
\end{eqnarray}
We see that the function $D_{\bv{\theta}, \bv{q}(t)}^{(s_i)}(x)$ is indeed the term inside the conditional expectation on the RHS of (\ref{eq:drift}). We now also define $D^{(s_i)*}_{\bv{\theta}, \bv{q}(t)}$ to be the optimal value of the following problem:
\begin{eqnarray}
\max:\,\, D_{\bv{\theta}, \bv{q}(t)}^{(s_i)}(x), \quad s.t., \,\,\, x^{(s_i)}\in\script{X}^{(s_i)}.\label{eq:QLAeq}
\end{eqnarray}
Hence $D^{(s_i)*}_{\bv{\theta}, \bv{q}(t)}$ is the maximum value of $D^{(s_i)}_{\bv{\theta}, \bv{q}(t)}$ over all possible policies, including those that  may not consider the no-underflow constraint (\ref{eq:action-feasible-cond}). 
The general Perturbed Max-Weight algorithm (PMW) then works as follows: 

\underline{\emph{PMW:}} Initialize the perturbation  vector $\bv{\theta}$. 
At every time slot $t$, observe the current network state $S(t)$ and the backlog $\bv{q}(t)$. If $S(t)=s_i$, choose $x^{(s_i)}\in\script{X}^{(s_i)}$ subject to (\ref{eq:action-feasible-cond})  that makes the value of $D_{\bv{\theta}, \bv{q}(t)}^{(s_i)}(x)$ close to $D^{(s_i)*}_{\bv{\theta}, \bv{q}(t)}$.




Note that depending on the problem structure, the PMW algorithm can usually be implemented easily, e.g., \cite{neelyhuang_assembly}, \cite{neelyenergy}. 
Now we analyze the performance of the PMW algorithm. 
We will prove our result under the following condition: 
\begin{condition}\label{condition:pmw}
There exists some finite constant $C\geq0$, such that 
at every time slot $t$ with a network state $S(t)$,  the value of $D_{\bv{\theta}, \bv{q}(t)}^{(S(t))}(x)$ under PMW  is at least $D_{\bv{\theta}, \bv{q}(t)}^{(S(t))*}-C$.
\end{condition} 

The immediate consequence of Condition \ref{condition:pmw} is that PMW also minimizes the RHS of (\ref{eq:drift}), i.e., the conditional expectation, to within $C$ of its minimum value over all possible policies. 
If $C=0$, then PMW simultaneously ensures (\ref{eq:action-feasible-cond}) and minimizes the RHS of (\ref{eq:drift}), e.g., as in the example in Section \ref{section:datafusionexample}. However, 
we note that Condition \ref{condition:pmw} does not require the value of $D_{\bv{\theta}, \bv{q}(t)}^{(S(t))}(x)$ to be exactly the same as $D_{\bv{\theta}, \bv{q}(t)}^{(S(t))*}$. This allows for more flexibility in constructing the PMW algorithm (See Section \ref{section:specificpmw} for an example). We also note that Condition \ref{condition:pmw} can be ensured, e.g., by carefully choosing the $\theta_j$ values to ensure 
$q_j(t)\geq\delta_{max}$ for all time  \cite{neelyhuang_assembly}. 
We will show  that, under Condition \ref{condition:pmw}, PMW achieves a time average utility that is within $O(1/V)$ of $f^*_{av}$, while guaranteeing that the time average network queue size is $O(V)+\sum_{j}w_j\theta_j$, which is $O(V)$ if $\bv{\theta}=\Theta(V)$ and $w_j=O(1),\,\forall\, j$. 
The following theorem summarizes PMW's performance results. 
\begin{theorem}\label{theorem:pmw1}
Suppose that (\ref{eq:slackness}) holds, that Condition \ref{condition:pmw} holds, and that $\expect{q_j(0)}<\infty$ for all $j=1, ..., r$. Then under PMW, we have: \footnote{Easy to see that (\ref{eq:performance-backlog}) ensures (\ref{eq:queuestable}), hence the network is stable under PMW. }
\begin{eqnarray}
f_{av}^{PMW}&\geq& f_{av}^*-\frac{B+C}{V},\label{eq:performance-utility} \\
\overline{q}^{PMW} &\leq&  \frac{B +C +2V\delta_{max}}{\eta} +\sum_{j=1}^rw_j\theta_j.\label{eq:performance-backlog} 
\end{eqnarray}
Here $B=\delta^2_{max}\sum_{j=1}^rw_j$, $\eta$ is the slackness parameter in Section \ref{subsection:costtrafficservice}, $f_{av}^{PMW}$ is defined in (\ref{eq:timeavcost}) to be the time average expected utility of PMW, and $\overline{q}^{PMW}$ is the time average expected weighted network backlog under PMW, defined:
\begin{eqnarray*}
\overline{q}^{PMW}\triangleq
\limsup_{t\rightarrow\infty}\frac{1}{t}\sum_{\tau=0}^{t-1}\sum_{j=1}^{r}w_j\expect{q_j(\tau)}.
\end{eqnarray*}
\end{theorem}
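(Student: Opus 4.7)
The plan is to combine the drift inequality of Lemma \ref{lemma:drift-eq} with the dual-function characterization in Lemma \ref{lemma:dualproblem}, and thereby avoid the explicit construction of an optimal stationary randomized policy that the usual Max-Weight analysis relies on. The first observation is that the right-hand side of (\ref{eq:drift}) is exactly $B - \mathbb{E}\{D_{\bv{\theta},\bv{q}(t)}^{(S(t))}(x(t))|\bv{q}(t)\}$, and that for every fixed $\bv{q}(t)$ the unconstrained optimum $D^{(s_i)*}_{\bv{\theta},\bv{q}(t)}$ coincides with $g_{s_i}(\bv{\gamma}(t))$ as soon as we set $\gamma_j(t) \triangleq w_j(q_j(t)-\theta_j)$; summing over $s_i$ with weights $\pi_{s_i}$ then gives $\mathbb{E}_{S(t)}\{D^{(S(t))*}_{\bv{\theta},\bv{q}(t)}\} = g(\bv{\gamma}(t))$. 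Condition \ref{condition:pmw} guarantees $\mathbb{E}\{D^{(S(t))}_{\bv{\theta},\bv{q}(t)}(x^{PMW}(t))|\bv{q}(t)\} \geq g(\bv{\gamma}(t)) - C$, so the drift-plus-penalty inequality collapses to
\begin{eqnarray*}
\Delta(t)-V\mathbb{E}\{f(t)|\bv{q}(t)\} \leq B + C - g(\bv{\gamma}(t)).
\end{eqnarray*}

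For (\ref{eq:performance-utility}) I would invoke weak duality: Lemma \ref{lemma:dualproblem} and Theorem \ref{theorem:optutility} give $g(\bv{\gamma}) \geq \phi^* \geq V f^*_{av}$ for every $\bv{\gamma}$, hence
\begin{eqnarray*}
\Delta(t)-V\mathbb{E}\{f(t)|\bv{q}(t)\} \leq B + C - V f^*_{av}.
\end{eqnarray*}
Taking expectations, summing over $t=0,\ldots,T-1$, dividing by $VT$, applying $L(T)\geq 0$ and $\mathbb{E}\{L(0)\}<\infty$, and passing to the $\liminf$ as $T\to\infty$ reproduces the example argument of Section \ref{section:datafusionexample} and yields the $O(1/V)$ gap.

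For (\ref{eq:performance-backlog}) I would extract a negative stability drift by lower-bounding $g(\bv{\gamma}(t))$ through the slackness hypothesis (\ref{eq:slackness}). Evaluating (\ref{eq:dual-function}) at the convex combination $\{x_k^{(s_i)},\vartheta_k^{(s_i)}\}$ gives
\begin{eqnarray*}
g(\bv{\gamma}(t)) \geq -V\delta_{max} + \sum_{j=1}^r w_j(q_j(t)-\theta_j)\bar{r}_j,
\end{eqnarray*}
where $\bar{r}_j \triangleq \sum_{s_i}\pi_{s_i}\sum_k\vartheta_k^{(s_i)}[\mu_j(s_i,x_k^{(s_i)})-A_j(s_i,x_k^{(s_i)})] \in [\eta,\delta_{max}]$. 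Splitting the inner sum on $\mathrm{sgn}(q_j(t)-\theta_j)$ (using $\bar{r}_j\geq\eta$ on the nonnegative coordinates and $\bar{r}_j\leq\delta_{max}$ on the negative ones) converts this into $\eta\sum_j w_j q_j(t)$ minus a constant linear in $\sum_j w_j\theta_j$. Substituting back, absorbing $V\mathbb{E}\{f(t)|\bv{q}(t)\}\leq V\delta_{max}$ to recover a bound on $\Delta(t)$ alone, telescoping over $t=0,\ldots,T-1$, dividing by $\eta T$, and taking $\limsup$ delivers (\ref{eq:performance-backlog}).

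The technical obstacle is precisely this sign-handling step in the backlog argument: the multipliers $\gamma_j(t)=w_j(q_j(t)-\theta_j)$ are not sign-definite, since the perturbation is deliberately chosen to push $\theta_j$ above typical queue levels, so the $\eta$-slack inequality cannot be applied coordinate-wise. The split on $\mathrm{sgn}(q_j(t)-\theta_j)$ is what rescues the argument: the nonnegative coordinates produce the desired $-\eta\sum_j w_j q_j(t)$ drift, while the negative coordinates contribute only a bounded penalty proportional to $\sum_j w_j\theta_j$, which then reappears as the additive term in the final backlog bound. Everything else is routine telescoping.
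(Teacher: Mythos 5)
Your proof of (\ref{eq:performance-utility}) is essentially the paper's own argument in Appendix C: the identity $D^{(s_i)*}_{\bv{\theta},\bv{q}(t)}=g_{s_i}((\bv{q}(t)-\bv{\theta})\otimes\bv{w})$, averaging over states via (\ref{eq:separable}), invoking Condition \ref{condition:pmw}, and the chain $g((\bv{q}(t)-\bv{\theta})\otimes\bv{w})\geq g(\bv{\gamma}^*)\geq\phi^*\geq Vf_{av}^*$ from Lemma \ref{lemma:dualproblem} and Theorem \ref{theorem:optutility} are exactly the steps used there, so that half is correct and identical in route. For (\ref{eq:performance-backlog}) you diverge in one step, and the divergence matters. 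The paper plugs the $\eta$-slack point of (\ref{eq:slackness}) into $\hat g$ and asserts in one line that $\hat g((\bv{q}(t)-\bv{\theta})\otimes\bv{w})\geq\eta\sum_j w_j[q_j(t)-\theta_j]-V\delta_{max}$; as you observe, this uses the coordinate-wise bound $\gamma_j\bar r_j\geq\eta\gamma_j$ with $\bar r_j\geq\eta$, which is valid only when $\gamma_j\geq 0$, whereas the perturbation is designed precisely so that $\gamma_j=w_j(q_j(t)-\theta_j)$ is typically negative. Your sign-split is the natural repair, but track what it actually yields: on the negative coordinates the best you can say is $\gamma_j\bar r_j\geq\delta_{max}\gamma_j\geq-\delta_{max}w_j\theta_j$, so your lower bound on $g$ carries an extra term $-(\delta_{max}-\eta)\sum_j w_j\theta_j$, and after telescoping and dividing by $\eta$ you arrive at $\overline{q}^{PMW}\leq\frac{B+C+2V\delta_{max}}{\eta}+\frac{\delta_{max}}{\eta}\sum_j w_j\theta_j$, not (\ref{eq:performance-backlog}). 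This preserves the $O(V)$ scaling (since $\theta_j=\Theta(V)$ and $w_j=O(1)$) but does not reproduce the stated constant; moreover, your framing that the negative coordinates supply "the additive term in the final backlog bound" is off --- the $\sum_j w_j\theta_j$ in (\ref{eq:performance-backlog}) already comes from converting $\sum_j w_j\expect{q_j(t)-\theta_j}$ into $\sum_j w_j\expect{q_j(t)}$, and your negative-coordinate penalty is an additional, larger contribution on top of it. So either the theorem should be read with your weaker constant, or one must justify the coordinate-wise inequality on the negative coordinates by some additional structure (the paper's one-line version implicitly assumes $q_j(t)\geq\theta_j$, which does not hold here). Everything else in your write-up (the $2V\delta_{max}$ bookkeeping, the telescoping, the liminf/limsup passages) matches the paper.
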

\begin{proof}
See Appendix C. 
\end{proof}
Theorem \ref{theorem:pmw1} shows that if Condition \ref{condition:pmw} holds,  
then PMW can be used as in previous networking problems, e.g., \cite{neelyenergy}, \cite{huangneelypricing}, to obtain explicit utility-backlog tradeoffs. 
We note that a condition  similar  to Condition \ref{condition:pmw} was assumed in \cite{dai-maxweight-spn}. However,  \cite{dai-maxweight-spn} only considers the usual Max-Weight algorithm, under which case (\ref{eq:action-feasible-cond}) may not be satisfied for all time. Whereas PMW resolves this problem by carefully choosing the perturbation vector. One such example of PMW is the recent work \cite{neelyhuang_assembly}, which applies PMW  to an assembly line scheduling problem and achieves an $[O(1/V), O(V)]$ utility-backlog tradeoff. 

\section{Constructing PMW for networks with output reward}\label{section:specificpmw}
In this section, we look at a specific yet general processing network model, and explicitly construct a PMW algorithm, including finding the proper $\bv{\theta}$ vector and choosing actions at each time slot. 

\subsection{Network Model}
We assume that the network is modeled by an acyclic directed graph $\script{G}=(\script{Q}, \script{P}, \script{L})$. Here $\script{Q}=\script{Q}^{s}\cup\script{Q}^{in}$ is the set of queues, consisting of the set of \emph{source} queues $\script{Q}^{s}$ where arrivals enter the network, and the set of \emph{internal} queues $\script{Q}^{in}$ where contents are stored for further processing. $\script{P}=\script{P}^{in}\cup\script{P}^{o}$ is the set of processors, consisting of a set of \emph{internal} processors $\script{P}^{in}$, which generate partially processed contents for further processing at other processors, and \emph{output} processors $\script{P}^{o}$, which generate fully processed contents and deliver them to the output. $\script{L}$ is the set of directed links that connects $\script{Q}$ and $\script{P}$. Note that a link only exists between a queue in  $\script{Q}$ and a processor in $\script{P}$. We denote $N^{in}_p=|\script{P}^{in}|$, $N^{o}_p=|\script{P}^{o}|$ and $N_p=N^{in}_p+N^{o}_p$. We also denote $N_q^s=|\script{Q}^s|$, $N_q^{in}=|\script{Q}^{in}|$ and $N_q=N^s_q+N^{in}_q$. 

Each processor $P_n$, when activated, consumes a certain amount of contents from a set of \emph{supply} queues, denoted by $\mathbb{Q}_n^S$, and generates some  amount of new contents. These new contents either go to a set of \emph{demand} queues, denoted by $\mathbb{Q}_n^D$, if $P_n\in\script{P}^{in}$, or are delivered to the output if $P_n\in\script{P}^{o}$. 
For any queue $q_j\in\script{Q}$, we use $\mathbb{P}_j^S$ to denote the set of processors that $q_j$ serves as a supply queue, and use $\mathbb{P}_j^D$ to denote the set of processors that $q_j$ serves as a demand queue. 
An example of such a network is shown in Fig. \ref{fig:example-anynet}. 
In the following, we assume that for each processor $P_i\in\script{P}^{in}$, $|\mathbb{Q}_i^D|=1$, i.e., each processor only generates contents for a single demand queue.


We use $\beta_{nj}$ to denote the amount processor $P_n$ consumes from a queue $q_j$ in $\mathbb{Q}_n^S$ when it is activated. For each $P_i\in\script{P}^{in}$, we also use $\alpha_{ih}$ to denote the amount $P_i$ generates into the queue $q_h$ if  $q_h=\mathbb{Q}_i^D$, when it is activated. For a processor $P_k\in\script{P}^{o}$, we use $\alpha_{ko}$ to denote the amount of output generated by it when it is turned on. \footnote{Note that here we only consider binary actions of processors. Our results can also be generalized into the case when there are multiple operation levels under which different amount of contents will be consumed and generated.}
We denote $\beta_{max}=\max_{i,j}\beta_{ij}$, $\beta_{min}=\min_{i, j}\beta_{ij}$ and $\alpha_{max}=\max_{i,j,}[\alpha_{ij}, \alpha_{io}]$. We assume that $\beta_{min}, \beta_{max}, \alpha_{max}>0$. 
We also define $M_p$ to be the maximum number of supply queues that any processor can have, define $M^d_q$ to be the maximum number of processors that any queue can serve as a  demand queue, and define $M^s_q$ to be the maximum number of processors that any queue can serve as a supply queue. 
We use $R_j(t)$ to denote the amount of contents arriving to a source queue $q_j\in\script{Q}^{s}$ at time $t$. We assume $R_j(t)$ is i.i.d. every slot, and that $R_j(t)\leq R_{max}$ for all $q_j\in\script{Q}^{s}$ and all $t$. We assume that there are no exogenous arrivals into the queues in $\script{Q}^{in}$.

\begin{figure}[cht]
\centering
\includegraphics[height=1.4in, width=2.8in]{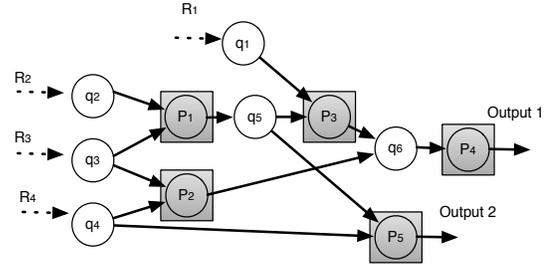}
\caption{A general processing network. }\label{fig:example-anynet}
\end{figure}

We assume that in every slot $t$, admitting any unit amount of $R_j(t)$ arrival incurs a cost of $c_j(t)$, and that activating any internal processor $P_i\in\script{P}^{in}$ incurs a cost of $C_i(t)$, whereas activating any output processor $P_k\in\script{P}^{o}$ generates a profit of $p_k(t)$ per unit output content. 
\footnote{This can be viewed as the difference between profit and cost associated with these processors.} We assume $c_j(t), C_i(t), p_k(t)$ are all i.i.d. every time slot. 
In the following, we also assume that $p_{min}\leq p_k(t)\leq p_{max}$, and that $c_{min}\leq c_j(t)\leq c_{max}$ and $C_{min}\leq C_i(t)\leq C_{max}$ for all $k, j, i$ and for all time. 

Below, we use $I_n(t)=1$ to denote the activation decision of $P_n$, i.e., $I_n(t)=1$ ($I_n(t)=0$) means that $P_n$ is activated (turned off). We also use $D_{j}(t)\in[0, 1]$ to denote the portion of arrivals from $R_j(t)$ that are admitted into $q_j$. 
We assume there exist some general constraint on how the processors can be activated, which can be due to, e.g., resource sharing among processors. We model this constraint by defining an activation vector $\bv{I}(t)=(I_1(t), ..., I_{N_p}(t))$, and then assume that  $\bv{I}(t) \in \script{I}$ for all time, where $\script{I}$ denotes the set of all feasible processor activation decision vectors, assuming all the queues have enough contents for processing. We assume that if  a vector $\bv{I}\in\script{I}$, then by changing one element of $\bv{I}$ from one to zero, the newly obtained vector $\bv{I}'$ satisfies $\bv{I}'\in\script{I}$. Note that the chosen vector $\bv{I}(t)$ must always ensure the constraint (\ref{eq:action-feasible-cond}), which in this case implies that $\bv{I}(t)$ has to satisfy the following constraint:
\begin{eqnarray}
q_j(t)\geq \sum_{n\in\mathbb{P}^S_j} I_n(t)\beta_{nj},\quad\forall\,\,j=1, ..., r. \label{eq:acitvation-cond}
\end{eqnarray}
Under this constraint, we see that the queues evolve according to the following queueing dynamics:
\begin{eqnarray*}
\hspace{-.3in} && q_j(t+1) = q_j(t) -\sum_{n\in\mathbb{P}^S_j} I_n(t)\beta_{nj} + D_j(t)R_j(t),\,\,\forall j\in\script{Q}^{s}, \\
\hspace{-.3in} &&q_j(t+1) = q_j(t)-\sum_{n\in\mathbb{P}^S_j} I_n(t)\beta_{nj} + \sum_{n\in\mathbb{P}^D_j} I_n(t)\alpha_{nj},\,\forall j\in\script{Q}^{in}. 
\end{eqnarray*}
Note that we have used $j\in\script{Q}$ to represent $q_j\in\script{Q}$, and use $n\in\script{P}$ to represent $P_n\in\script{P}$ in the above for notation simplicity. 
The objective is to maximize the time average of the following utility function:
\begin{eqnarray}
\hspace{-.3in}&&f(t) \triangleq \sum_{k\in \script{P}^{o}} I_k(t) p_k(t)\alpha_{ko} - \sum_{j\in\script{Q}^{s}} D_j(t)R_j(t) c_j(t) \label{eq:utility}\\
\hspace{-.3in}&&\qquad\qquad\qquad\qquad\qquad\qquad\qquad\qquad- \sum_{i\in \script{P}^{in}} I_i(t) C_i(t). \nonumber 
\end{eqnarray}
 (\ref{eq:utility}) can be used to model applications where generating completely processed contents is the primary target, e.g., \cite{neelyhuang_assembly}. 

\vspace{-.1in}
\subsection{Relation to the general model}
We see that in this network model, the network state, the action, and the traffic and service functions are given by:
\begin{itemize}
\item The network state is given by:  
\[S(t)=(c_j(t), j\in \script{Q}^{s}, C_i(t), i\in\script{P}^{in}, p_k(t), k\in\script{P}^{o}).\]
\item  The action $x(t) = (D_j(t), j\in\script{Q}^{s}, I_n(t), n\in\script{P})$.
\item  The arrival functions are given by:
\begin{eqnarray*}
\hspace{-.3in}&&A_j(t)=A_j(S(t), x(t)) = D_j(t)R_j(t),\,\,\, \forall\, q_j\in\script{Q}^{s},\\
\hspace{-.3in}&&A_j(t) =A_j(S(t), x(t)) = \sum_{n\in\mathbb{P}^D_j} I_n(t)\alpha_{nj}, \,\,\, \forall\, q_j\in\script{Q}^{in}.
\end{eqnarray*}
\item The service functions are given by:
\begin{eqnarray*}
\mu_j(t) = \mu_j(S(t), x(t)) = \sum_{n\in\mathbb{P}^S_j}I_n(t)\beta_{nj},\quad\forall \,\, j.
\end{eqnarray*}
\end{itemize}
Thus, we see that this network model falls into the general processing network framework in Section \ref{section:model}, and Theorem \ref{theorem:pmw1} will apply in this case. Therefore, in the following, we will construct our PMW algorithm to ensure that Condition \ref{condition:pmw} holds.

\subsection{The PMW algorithm}
We now obtain the PMW algorithm for this general network in the following. We will look for a perturbation vector that is the same in all entries, i.e., $\bv{\theta}=\theta\bv{1}$. 
We first compute the ``drift-plus-penalty'' expression using the weighted perturbed Lyapunov function defined in (\ref{eq:lyapunov-function}) under some given positive constants $\{w_j\}_{j=1}^r$  and some nonzero constant $\theta$: 
\begin{eqnarray}
\hspace{-.3in} && \Delta(t) - V\expect{f(t)\left.|\right. \bv{q}(t)} \leq B\label{eq:drift1-gen}\\
\hspace{-.3in} && - \sum_{ j\in\script{Q}^{s}}\expect{w_j\big[q_j(t)-\theta\big]\big[\sum_{n \in \mathbb{P}^S_j} I_n(t)\beta_{nj}  -R_j(t)D_j(t) \big]\left.|\right. \bv{q}(t)} \nonumber\\
\hspace{-.3in} && - \sum_{j\in\script{Q}^{in}}\expect{w_j\big[q_j(t)-\theta\big]\big[\sum_{n\in \mathbb{P}^S_j} I_n(t)\beta_{nj}  \nonumber \\
\hspace{-.3in} && \qquad\qquad\qquad\qquad\qquad\qquad\qquad- \sum_{n\in \mathbb{P}^D_j} I_n(t)\alpha_{nj}  \big]\left.|\right. \bv{q}(t)} \nonumber\\
\hspace{-.3in} && -V\expect{\sum_{k\in\script{P}^{o}}I_k(t)p_k(t) \alpha_{ko} -\sum_{j\in \script{Q}^{s}}D_j(t)R_j(t)c_j(t)  \nonumber\\
\hspace{-.3in} && \qquad\qquad\qquad\qquad\qquad\qquad\qquad-\sum_{i\in\script{P}^{in}} I_i(t)C_i(t)
\left.|\right. \bv{q}(t)}.\nonumber 
\end{eqnarray}
Here $B= w_{max}\big[\frac{N_q(M^s_q\beta_{max})^2+N^s_qR_{max}^2+N^{in}_q(M^d_q\alpha_{max})^2}{2}\big]$, where $w_{max}=\max_jw_j$. We also denote $w_{min}=\min_jw_j$.  
Rearranging the terms, we get the following:
\begin{eqnarray}
\hspace{-.3in} && \Delta(t) - V\expect{f(t)\left.|\right. \bv{q}(t)} \leq B \label{eq:drift2-gen}\\
\hspace{-.3in} && +\sum_{j\in\script{Q}^{s}} \expect{\big[Vc_j(t)+w_j(q_j(t)-\theta)\big] D_j(t)R_j(t)\left.|\right. \bv{q}(t)} \nonumber\\
\hspace{-.3in} && - \sum_{k\in\script{P}^{o}} \expect{I_k(t) \big[ \sum_{j\in\mathbb{Q}^S_k} w_j(q_j(t)-\theta)\beta_{kj} + Vp_k(t)\alpha_{ko} \big]  \left.|\right. \bv{q}(t)}\nonumber\\
\hspace{-.3in} &&  -\sum_{i\in\script{P}^{in}} \expect{ I_i(t) \big[ \sum_{j\in\mathbb{Q}^S_i}w_j(q_j(t)-\theta) \beta_{ij} - w_h(q_h(t)-\theta)\alpha_{ih} \nonumber\\
\hspace{-.3in} && \qquad \qquad \qquad \qquad \qquad \qquad \qquad \qquad  \quad\,\,\,- VC_i(t)\big]  \left.|\right. \bv{q}(t) }.\nonumber
\end{eqnarray}
Here in the last term $q_h=\mathbb{Q}^D_i$. 
We now present the PMW algorithm. We see that in this case the $D_{\bv{\theta}, \bv{q}(t)}^{(S(t))}(x)$ function is given by:
\begin{eqnarray}
\hspace{-.3in} &&D_{\bv{\theta}, \bv{q}(t)}^{(S(t))}(x) = -\sum_{j\in\script{Q}^{s}} \big[Vc_j(t)+w_j(q_j(t)-\theta)\big] D_j(t)R_j(t) \nonumber\\
\hspace{-.3in} &&\qquad\qquad+\sum_{k\in\script{P}^{o}} I_k(t) \big[ \sum_{j\in\mathbb{Q}^S_k} w_j(q_j(t)-\theta)\beta_{kj} + Vp_k(t)\alpha_{ko} \big] \nonumber\\
\hspace{-.3in} &&\qquad\qquad+\sum_{i\in\script{P}^{in}} I_i(t) \big[ \sum_{j\in\mathbb{Q}^S_i}w_j(q_j(t)-\theta) \beta_{ij} \label{eq:Dfunction}\\
\hspace{-.3in} &&\qquad\qquad\qquad\qquad\qquad\qquad- w_h(q_h(t)-\theta)\alpha_{ih} - VC_i(t)\big]. \nonumber
\end{eqnarray}
Our goal is to design PMW in a way such that under any network state $S(t)$, the value of $D_{\bv{\theta}, \bv{q}(t)}^{(S(t))}(x)$ is close to $D_{\bv{\theta}, \bv{q}(t)}^{(S(t))*}(x)$, which is the maximum value of $D_{\bv{\theta}, \bv{q}(t)}^{(S(t))}(x)$ without the underflow constraint (\ref{eq:acitvation-cond}), i.e., 
\begin{eqnarray*}
D_{\bv{\theta}, \bv{q}(t)}^{(S(t))*}(x) = \max_{D_j(t)\in[0,1], \bv{I}(t)\in\script{I}} D_{\bv{\theta}, \bv{q}(t)}^{(S(t))}(x).
\end{eqnarray*}
Specifically, PMW works as follows:

\underline{\emph{PMW:}} Initialize $\bv{\theta}$. At every time slot $t$, observe $S(t)$ and $\bv{q}(t)$, and do the following:
\begin{enumerate}
\item \underline{Content Admission:} Choose $D_j(t)=1$, i.e., admit all new arrivals to $q_j\in \script{Q}^{s}$ if: 
\vspace{-.05in}
\begin{eqnarray}
Vc_j(t)+ w_j(q_j(t)-\theta) < 0, \label{eq:admit-gen}
\end{eqnarray}
else set $D_j(t)=0$. 

\item \underline{Processor Activation:} For each $P_i\in \script{P}^{in}$, define its weight $W^{(in)}_i(t)$ as:
\begin{eqnarray}
\hspace{-.4in}&&W^{(in)}_i(t)= \big[\sum_{q_j\in \mathbb{Q}^S_i} w_j[q_j(t)-\theta]\beta_{ij} \label{eq:processor-weight-in}\\
\hspace{-.4in}&&\qquad\qquad\qquad\qquad -  w_h[q_h(t)-\theta]\alpha_{ih} - VC_i(t)\big]^+, \nonumber
\end{eqnarray}
where $q_h=\mathbb{Q}^D_i$. 
Similarly, for each $P_k\in\script{P}^{o}$, define its weight $W^{(o)}_k(t)$ as:
\begin{eqnarray}
W^{(o)}_k(t) = \big[\sum_{q_j\in  \mathbb{Q}^S_k} w_j[q_j(t)-\theta]\beta_{kj}  +  Vp_k(t)\alpha_{ko}\big]^+. \label{eq:processor-weight-out}
\end{eqnarray}
Then, choose an activation vector $\bv{I}(t)$ from $\script{I}$ to maximize:
\begin{eqnarray}
\sum_{i\in\script{P}^{in}} I_i(t) W^{(in)}_i(t) + \sum_{k\in\script{P}^{o}} I_k(t) W^{(o)}_k(t), \label{eq:maxweight}
\end{eqnarray}
subject to the following \emph{queue edge constraints: }
\begin{enumerate}
\item For each $P_i\in\script{P}^{in}$, set $I_i(t)=1$, i.e., activate processor $P_i$, only if:  
\begin{itemize}
\item $q_j(t)\geq M_q^s\beta_{max}$ for all $q_j\in \mathbb{Q}^S_i$, 
\item $q_h(t)\leq\theta$, where $q_h= \mathbb{Q}^D_i$.
\end{itemize}
\item For each $P_k\in \script{P}^{o}$, choose $I_k(t)=1$ only if:
\begin{itemize}
\item  $q_j(t)\geq M_q^s\beta_{max}$ for all $q_j\in  \mathbb{Q}^S_k$. 
\end{itemize}
\end{enumerate}
\end{enumerate}
The approach of imposing the queue edge constraints was inspired by the work \cite{neely_universal_scheduling}, where similar constraints are imposed for routing problems. 
Note that if without these queue edge constraints, then PMW will be the same as the action that maximizes $D_{\bv{\theta}, \bv{q}(t)}^{(s_i)}(x)$ without the underflow constraint (\ref{eq:acitvation-cond}). 

\subsection{Performance}
Here we show that PMW indeed ensures that the value of $D_{\bv{\theta}, \bv{q}(t)}^{(S(t))}(x)$ is within some additive constant of $D_{\bv{\theta}, \bv{q}(t)}^{(S(t))*}(x)$. In the following, we assume that:
\begin{eqnarray}
\hspace{-.3in}&&\theta\geq \max\big[\frac{V\alpha_{max}p_{max}}{w_{min}\beta_{min}},  \frac{Vc_{min}}{w_{min}}+M^s_q\beta_{max}\big]. \label{eq:theta-cond-gen} 
\end{eqnarray}
We also assume that the $\{w_j\}_{j=1}^r$ values are chosen such that for any processor $P_i\in\script{P}^{in}$ with the demand queue $q_h$, we have for any supply queue $q_j\in\mathbb{Q}_i^S$ that:
\begin{eqnarray}
w_j\beta_{ij} \geq w_h \alpha_{ih}.\label{eq:w-cond-gen}
\end{eqnarray}
We note that (\ref{eq:theta-cond-gen}) can easily be satisfied and only requires $\theta=\Theta(V)$. A way of choosing the $\{w_j\}_{j=1}^r$ values to satisfy (\ref{eq:w-cond-gen}) is given in  Section \ref{subsection:choosing-w}.  
Note that in the special case when $\beta_{ij}=\alpha_{ij}=1$ for all $i, j$, simply using $w_j=1,\,\forall\, j$ meets the condition (\ref{eq:w-cond-gen}). 

We first look at the queueing bounds. By (\ref{eq:admit-gen}), $q_j$ admits new arrivals only when $q_j(t)<\theta- Vc_{min}/w_j$. Thus: 
\begin{eqnarray}
q_j(t)\leq \theta - Vc_{min}/w_j+R_{max}, \quad\forall\,\,q_j\in\script{Q}^{s}, t. \label{eq:source-q-bound}
\end{eqnarray}
Now by the processor activation rule, we also see that: 
\begin{eqnarray}
0\leq q_j(t) \leq \theta + M_q^d\alpha_{max}, \quad\forall\,\, q_j\in\script{Q}^{in}, t. \label{eq:queuebound-gen}
\end{eqnarray}
This is because under the PMW algorithm, a processor is activated only when all its supply queues have at least $M_q^s\beta_{max}$ units of contents, and when its demand queue has at most $\theta$ units of contents. The first requirement ensures that $q_j(t)\geq 0$ for all time, while the second requirement ensures that $q_j(t)\leq \theta + M_q^d\alpha_{max}$. Below, by defining: 
\begin{eqnarray}
\nu_{max} \triangleq\max\big[M_q^d\alpha_{max}, R_{max}, M^s_q\beta_{max} \big],\label{eq:numax} 
\end{eqnarray}
we can compactly write (\ref{eq:source-q-bound}) and (\ref{eq:queuebound-gen}) as: 
\begin{eqnarray}
0\leq q_j(t) \leq \theta + \nu_{max}, \quad\forall\,\, q_j\in\script{Q}, t. \label{eq:queuebound-gen-nu}
\end{eqnarray}

To prove the performance of the PMW algorithm, it suffices to prove the following lemma, which shows that Condition \ref{condition:pmw} holds for some finite constant $C$ under the PMW algorithm. 
\begin{lemma} \label{lemma:pmw-m-rhs}
Suppose (\ref{eq:theta-cond-gen}) and (\ref{eq:w-cond-gen}) hold. Then under 
PMW,  $D_{\bv{\theta}, \bv{q}(t)}^{(S(t))}(x)\geq D_{\bv{\theta}, \bv{q}(t)}^{(S(t))*}(x)-C$, where $C =  N_pw_{max}M_p\nu_{max}\beta_{max}$. 
\end{lemma}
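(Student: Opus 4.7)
The plan is to decompose the function $D_{\bv{\theta}, \bv{q}(t)}^{(S(t))}(x)$ in (\ref{eq:Dfunction}) into an admission part (involving only $\{D_j(t)\}$) and a processor-activation part (involving only $\{I_n(t)\}$), and then compare PMW's choices against those of an unconstrained maximizer $(\{D_j^*\},\bv{I}^*)$ of the full $D$ function, i.e., over its natural domain ignoring the underflow constraint (\ref{eq:acitvation-cond}). The admission part is separable across source queues, and PMW's rule (\ref{eq:admit-gen}) precisely implements the pointwise argmax of $-[Vc_j(t)+w_j(q_j(t)-\theta)]D_j(t)R_j(t)$ over $D_j(t)\in[0,1]$. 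Hence PMW's admission contribution already equals that of the unconstrained maximizer, and the entire gap $C$ must come from the activation part.

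For activation, I plan to exploit the monotonicity assumption on $\script{I}$ to reduce the comparison to a per-processor bound. Starting from any unconstrained maximizer $\bv{I}^*\in\script{I}$ of the activation objective, form $\bv{I}'$ by setting $I'_n=0$ for every $P_n$ with $I_n^*=1$ whose queue edge constraint (all supply queues at least $M_q^s\beta_{max}$, and additionally $q_h(t)\leq\theta$ when $P_n\in\script{P}^{in}$) currently fails, and $I'_n=I_n^*$ otherwise. By monotonicity, $\bv{I}'\in\script{I}$, and by construction $\bv{I}'$ satisfies every queue edge constraint, so it lies inside the constrained feasible set that PMW's activation step actually optimizes over. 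Consequently the activation value achieved by PMW is at least that of $\bv{I}'$, and the activation loss relative to $\bv{I}^*$ is at most $\sum_{n\in\script{B}}W_n$, where $\script{B}$ is the at-most-$N_p$ set of processors at which $\bv{I}^*$ and $\bv{I}'$ disagree.

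The core and hardest step is to bound the weight of each $P_n\in\script{B}$ by $M_p w_{max}\nu_{max}\beta_{max}$; summing over at most $N_p$ such processors then yields $C$. The universal queue bound (\ref{eq:queuebound-gen-nu}) caps each $w_j(q_j(t)-\theta)\beta_{nj}$ contribution at $w_{max}\nu_{max}\beta_{max}$, so the $M_p-1$ non-violating supply queues already contribute at most $(M_p-1)w_{max}\nu_{max}\beta_{max}$. The delicate task is controlling the remaining terms. For an output processor $P_k\in\script{P}^{o}$ with violating supply queue $q_{j^*}$, the first half of (\ref{eq:theta-cond-gen}), namely $\theta\geq V\alpha_{max}p_{max}/(w_{min}\beta_{min})$, is tailored precisely so that $-w_{j^*}\theta\beta_{kj^*}\leq -Vp_{max}\alpha_{max}$ cancels the $Vp_k(t)\alpha_{ko}$ profit, leaving only the bounded residual $w_{j^*}q_{j^*}\beta_{kj^*}\leq w_{max}M_q^s\beta_{max}\beta_{max}\leq w_{max}\nu_{max}\beta_{max}$. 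For an internal processor $P_i\in\script{P}^{in}$, the case $q_h(t)>\theta$ is immediate since $-w_h(q_h-\theta)\alpha_{ih}<0$. The supply-queue violation case is where (\ref{eq:w-cond-gen}) is essential: pairing $w_{j^*}(q_{j^*}-\theta)\beta_{ij^*}$ with $-w_h(q_h-\theta)\alpha_{ih}$ and invoking $w_{j^*}\beta_{ij^*}\geq w_h\alpha_{ih}$ yields $\theta(w_h\alpha_{ih}-w_{j^*}\beta_{ij^*})\leq 0$, which cancels the potentially large $w_h\theta\alpha_{ih}$ against $-w_{j^*}\theta\beta_{ij^*}$ and leaves only $w_{j^*}q_{j^*}\beta_{ij^*}-w_hq_h\alpha_{ih}\leq w_{max}\nu_{max}\beta_{max}$. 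Assembling these per-processor estimates produces the desired constant $C$.
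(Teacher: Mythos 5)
Your proposal is correct and follows essentially the same route as the paper's Appendix D: the admission part is matched exactly, the activation comparison is done by zeroing out the edge-constraint-violating processors in an unconstrained maximizer (valid by the monotonicity of $\script{I}$), and each such processor's weight is bounded by $M_p w_{max}\nu_{max}\beta_{max}$ using the universal queue bound, the two halves of (\ref{eq:theta-cond-gen}), and (\ref{eq:w-cond-gen}) in exactly the same roles. The only cosmetic difference is that you phrase the cancellations as explicit pairings of a violating supply queue against the profit or demand-queue term, while the paper groups terms over $\hat{\mathbb{Q}}^S_i$ and its complement; the resulting estimates are identical.
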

\begin{proof} 
See Appendix D. 
\end{proof}


We can now directly use Theorem \ref{theorem:pmw1} to have the following corollary concerning the performance of PMW in this case:
\begin{coro}
Suppose (\ref{eq:slackness}), (\ref{eq:theta-cond-gen})  and (\ref{eq:w-cond-gen}) hold. Then PMW achieves the following:
\begin{eqnarray}
f_{av}^{PMW}&\geq& f^*_{av} - \frac{B+C}{V}, \\
\overline{q}^{PMW}&\leq&\frac{B+C+2V\delta_{max}}{\eta} + \theta\sum_{j=1}^rw_j,
\end{eqnarray}
where $C=N_pw_{max}M_p\nu_{max}\beta_{max}$, $f^{PMW}_{av}$ and $\overline{q}^{PMW}$ are the time average expected utility and time average expected weighted backlog under PMW, respectively. $\blacksquare$
\end{coro}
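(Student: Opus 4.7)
The plan is to assemble this corollary as a direct consequence of Theorem~\ref{theorem:pmw1} applied to the specific network of Section~\ref{section:specificpmw}. The groundwork was already laid in the subsection ``Relation to the general model,'' which identified the network state $S(t)$, the action $x(t)=(D_j(t),I_n(t))$, and the traffic and service functions $A_j(\cdot)$, $\mu_j(\cdot)$ so that the model fits the framework of Section~\ref{section:model}. The instantaneous utility $f(t)$ in (\ref{eq:utility}) and the traffic/service functions are bounded (since $R_j(t)\le R_{max}$, $I_n(t)\in\{0,1\}$, and all $c_j,C_i,p_k$ lie in bounded intervals), so they admit a common bound $\delta_{max}<\infty$ as required in Section~\ref{subsection:costtrafficservice}. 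Hence Theorem~\ref{theorem:pmw1} will apply as soon as we verify its three hypotheses: the slackness condition (\ref{eq:slackness}), Condition~\ref{condition:pmw}, and $\expect{q_j(0)}<\infty$ for all $j$.

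The slackness condition (\ref{eq:slackness}) is assumed directly in the corollary. The initial-backlog condition is a standing assumption (we may take $q_j(0)=0$ or any other finite value). The nontrivial hypothesis is Condition~\ref{condition:pmw}, and this is exactly what Lemma~\ref{lemma:pmw-m-rhs} establishes: under the assumptions (\ref{eq:theta-cond-gen}) and (\ref{eq:w-cond-gen}), the PMW algorithm, even when restricted by the queue-edge constraints and by the no-underflow constraint (\ref{eq:acitvation-cond}), yields a value of $D^{(S(t))}_{\bv{\theta},\bv{q}(t)}(x)$ that is within $C=N_p w_{max}M_p\nu_{max}\beta_{max}$ of the unconstrained maximum $D^{(S(t))*}_{\bv{\theta},\bv{q}(t)}$. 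Thus Condition~\ref{condition:pmw} holds with this constant $C$.

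With all hypotheses in place, I would simply invoke Theorem~\ref{theorem:pmw1} to obtain
\begin{eqnarray*}
f^{PMW}_{av}\ge f^*_{av}-\frac{B+C}{V},\qquad
\overline{q}^{PMW}\le \frac{B+C+2V\delta_{max}}{\eta}+\sum_{j=1}^r w_j\theta_j.
\end{eqnarray*}
The final step is a trivial substitution: because Section~\ref{section:specificpmw} uses a uniform perturbation $\bv{\theta}=\theta\bv{1}$, the term $\sum_{j}w_j\theta_j$ collapses to $\theta\sum_{j=1}^r w_j$, matching the stated backlog bound. The constant $B=\delta_{max}^2\sum_j w_j$ from Theorem~\ref{theorem:pmw1} is the same $B$ already computed in (\ref{eq:drift1-gen}) for this specific model.

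There is no real obstacle: the heavy lifting has been done in Theorem~\ref{theorem:pmw1} and Lemma~\ref{lemma:pmw-m-rhs}. The only place one needs to be careful is in checking that the queue-edge constraints used in PMW (which were added on top of the basic drift-minimization policy) do not enlarge the optimality gap beyond the $C$ of Lemma~\ref{lemma:pmw-m-rhs}, and that no-underflow (\ref{eq:acitvation-cond}) is automatically implied by these edge constraints together with the bounds $0\le q_j(t)\le \theta+\nu_{max}$ in (\ref{eq:queuebound-gen-nu}); both are already handled inside the proof of Lemma~\ref{lemma:pmw-m-rhs}. Hence the corollary follows immediately.
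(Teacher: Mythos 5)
Your proposal is correct and follows exactly the paper's route: the paper proves the corollary by noting that Lemma \ref{lemma:pmw-m-rhs} supplies Condition \ref{condition:pmw} with $C=N_pw_{max}M_p\nu_{max}\beta_{max}$, and then invoking Theorem \ref{theorem:pmw1} directly, with $\sum_j w_j\theta_j=\theta\sum_j w_j$ for the uniform perturbation $\bv{\theta}=\theta\bv{1}$. Nothing is missing.
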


Note that here $\delta_{max}$ can be chosen to be:
\begin{eqnarray*}
\hspace{-.3in}&&\delta_{max}=\max\big[\nu_{max}, N_p^op_{max}\alpha_{max}, \\
\hspace{-.3in}&&\qquad\qquad\qquad\qquad\qquad\qquad N^s_qR_{max}c_{max}+N^{in}_pC_{max}\big]. 
\end{eqnarray*}
Also, since (\ref{eq:theta-cond-gen}) only requires $\theta=\Theta(V)$, and $w_j=\Theta(1)$ for all $j$, we see that PMW indeed achieves an $[O(1/V), O(V)]$ utility-backlog tradeoff in this case.

\subsection{Choosing the $\{w_j\}_{j=1}^r$ values} \label{subsection:choosing-w}
Here we describe how to choose the $\{w_j\}_{j=1}^r$ values to satisfy (\ref{eq:w-cond-gen}). We first let $K$ be the maximum number of processors that any path going from a queue to an output processor can have. It is easy to see that $K \leq |N_p|$ since there is no cycle in the network. 
The following algorithm terminates in $K$ iterations. We use $w_j(k)$ to denote the value of $w_j$ at the $k^{th}$ iteration. In the following, we use $q_{h_n}$ to denote the demand queue of a processor $P_n$. 

\begin{enumerate}
\item At Iteration $1$, denote the set of queues that serve as supply queues for any output processor as $\mathbb{Q}^l_{1}$, i.e., 
\[\mathbb{Q}^l_{1}=\{q_j: \mathbb{P}^S_j\cap\script{P}^{o}\neq\phi\}.\]
Then set $w_j(1)=1$ for each $q_j\in\mathbb{Q}^l_{1}$. Also, set $w_j(1)=0$ for all other $q_j\notin\mathbb{Q}^l_{1}$.
\item At Iteration $k=2, ..., K$, denote $\mathbb{Q}^l_k$ to be the set of queues that serve as supply queues for any processor whose demand queue is in $\mathbb{Q}^l_{k-1}$, i.e., 
\[ \mathbb{Q}^l_{k}=\{q_j: \exists\, P_n\in\mathbb{P}^S_j\,\,\, s.t.\,\,\, \mathbb{Q}^D_n\in\mathbb{Q}^l_{k-1}\}.\]
Then set:
\begin{eqnarray}
w_j(k) = \max\big[w_j(k-1), \max_{n\in\mathbb{P}^S_j} \frac{w_{h_n}(k-1)\alpha_{nh_n}}{\beta_{nj}} \big], \label{eq:w-values}
\end{eqnarray}
where $\alpha_{nh_n}$ is the amount $P_n$ generates into $q_{h_n}$, which is the demand queue of $P_n$. Also, set $w_j(k)=w_j(k-1)$ for all $q_j\notin\mathbb{Q}^l_{k}$.
\item Output the $\{w_j\}_{j=1}^r$ values. 
\end{enumerate}
The following lemma shows that the above algorithm outputs a set of $\{w_j\}_{j=1}^r$ values that satisfy (\ref{eq:w-cond-gen}). 
\begin{lemma} \label{lemma:wvalues}
The $\{w_j\}_{j=1}^r$ values generated by the above algorithm satisfy (\ref{eq:w-cond-gen}).
\end{lemma}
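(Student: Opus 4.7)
The plan is to establish (\ref{eq:w-cond-gen}) by, for each processor $P_i\in\script{P}^{in}$ with supply queue $q_j\in\mathbb{Q}^S_i$ and demand queue $q_h=\mathbb{Q}^D_i$, pinpointing a single iteration of the algorithm at which $w_j$ is updated using a term involving $w_h$ that has already reached its final value.

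First I would record two elementary facts drawn directly from the algorithm. (i) For every $q_j$ the sequence $\{w_j(k)\}_{k=1}^K$ is non-decreasing, since every update takes a maximum against $w_j(k-1)$. (ii) If $\ell^*(q)\triangleq\max\{k:q\in\mathbb{Q}^l_k\}$, then $w_q(\ell^*(q))=w_q(K)$, because the algorithm leaves $w_q$ unchanged at every iteration for which $q\notin\mathbb{Q}^l_k$. Unrolling the recursive definition of $\mathbb{Q}^l_k$ also shows that $q\in\mathbb{Q}^l_k$ iff the acyclic graph $\script{G}$ contains a chain of exactly $k$ processors from $q$ to some output processor in $\script{P}^{o}$.

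Next I would verify that $\ell\triangleq\ell^*(q_h)$ is well-defined and that $\ell+1\leq K$. Since $P_i$ is an internal processor and $\script{G}$ is acyclic, $q_h$ feeds contents through some chain of processors to an output processor, so $q_h$ lies in some $\mathbb{Q}^l_m$ and $\ell$ exists. Prepending the link $q_j\to P_i\to q_h$ to any length-$\ell$ chain from $q_h$ produces a length-$(\ell+1)$ chain from $q_j$, which simultaneously places $q_j$ in $\mathbb{Q}^l_{\ell+1}$ and forces $\ell+1\leq K$ by the definition of $K$.

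Applying the update rule (\ref{eq:w-values}) at iteration $\ell+1$ and restricting the inner maximum to the term coming from $P_i\in\mathbb{P}^S_{j}$ (whose demand queue is $q_h$) yields
$$w_j(\ell+1)\geq\frac{w_h(\ell)\,\alpha_{ih}}{\beta_{ij}}.$$
Combining fact (ii) for $q_h$, which gives $w_h(\ell)=w_h(K)=w_h$, with fact (i) for $q_j$, which gives $w_j=w_j(K)\geq w_j(\ell+1)$, delivers $w_j\beta_{ij}\geq w_h\alpha_{ih}$, i.e.\ (\ref{eq:w-cond-gen}). The main subtlety to guard against is that a queue may belong to several $\mathbb{Q}^l_k$ sets at once and hence be revisited by the algorithm at multiple iterations; selecting the \emph{largest} such index $\ell=\ell^*(q_h)$ is precisely what guarantees $w_h$ has already been frozen when we cite the inequality for $w_j$ at iteration $\ell+1$.
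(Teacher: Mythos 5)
Your proposal is correct and follows essentially the same route as the paper's proof: identify the last iteration $\ell$ at which $q_h$ belongs to $\mathbb{Q}^l_k$ (the paper's $\hat{k}$), show that every supply queue of $P_i$ is then updated at iteration $\ell+1\leq K$ (the paper rules out $\ell=K$ by the same chain-length contradiction with the definition of $K$), and conclude via monotonicity of the $w_j(k)$. Your explicit isolation of the two facts (monotonicity, and freezing of $w_h$ after $\ell^*(q_h)$) is a slightly cleaner packaging of the identical argument.
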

\begin{proof}
See Appendix E. 
\end{proof}


As a concrete example, we consider the example in Fig. \ref{fig:example-anynet}, with the assumption that each processor, when activated, consumes one unit of content from each of its supply queues and generates two units of contents into its demand queue. In this example, we see that $K=3$. Thus the algorithm works as follows:
\begin{enumerate}
\item Iteration $1$, denote $\mathbb{Q}^l_1=\{q_4, q_5, q_6\}$, set $w_4(1)=w_5(1)=w_6(1)=1$. For all other queues, set $w_j(1)=0$.
\item Iteration $2$, denote $\mathbb{Q}^l_2=\{q_1, q_2, q_3, q_4, q_5\}$, set $w_1(2)=w_2(2)=w_3(2)=w_4(2)=w_5(2)=2$. Set  $w_6(2)=1$.
\item Iteration $3$, denote $\mathbb{Q}^l_3=\{q_2, q_3\}$, set $w_2(3)=w_3(3)=4$. Set $w_1(3)=w_4(3)=w_5(3)=2$, $w_6(3)=1$. 
\item Terminate and output $w_1=w_4=w_5=2$, $w_2=w_3=4$, $w_6=1$. 
\end{enumerate}

\vspace{-.1in}
\section{Simulation}\label{section:simulation}
In this section, we simulate the example given in Fig. \ref{fig:example-anynet}. In this example, we assume each $R_j(t)$ is Bernoulli being $0$ or $2$ with equal probabilities. For each $P_i\in\script{P}^{in}$, i.e., $P_1, P_2, P_3$, $C_i(t)$ is assumed to be $1$ or $10$ with probabilities $0.3$ and $0.7$, respectively. 
For the output processors $P_k\in\script{P}^{o}$, i.e., $P_4$ and $P_5$, we assume that $p_k(t)=1$ or $3$ with probabilities $0.6$ and $0.4$, respectively. 
We assume that each processor, when activated, takes one unit of content from each of its supply queues and generates two units of contents into its demand queue (or to the output if it is an output processor). We further assume that all processors can be turned on without affecting others. 
Note that in this case, we have $c_j(t)=0$ for all source queues $q_j$. 

It is easy to see that in this case $M_p=M_q^s=M_q^d=2$,  $\beta_{max}=\beta_{min}=1$, and $\alpha_{max}=2$. 
Using the results in the above, we choose $w_6=1$,  $w_1=w_4=w_5=2$, $w_2=w_3=4$. We also use $\theta=6V$ according to (\ref{eq:theta-cond-gen}). We simulate the PMW algorithm for $V\in\{5, 7, 10, 15,  20, 50, 100\}$. Each simulation is run over $5\times10^6$ slots. 

Fig. \ref{fig:example-sim2} shows the utility and backlog performance of the PMW algorithm. We see that as $V$ increases, the average utility performance quickly converges to the optimal value. The average backlog size also only grows linear in $V$. 
\begin{figure}[cht]
\centering
\includegraphics[height=1.6in, width=3.3in]{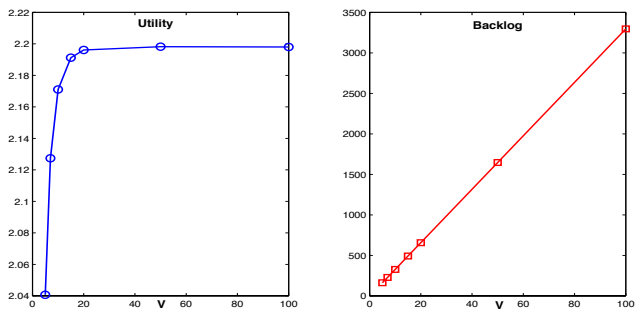}
\caption{Utility and backlog performance of PMW. }\label{fig:example-sim2}
\end{figure}

Fig. \ref{fig:samplepathq} also shows three sample path queue processes  in the first $10^4$ slots under $V=100$. We see that no queue has an underflow. This shows that all the activation decisions of PMW are feasible. It is also easy to verify that the queueing bounds (\ref{eq:source-q-bound}) and (\ref{eq:queuebound-gen}) hold for all time. 
\begin{figure}[cht]
\centering
\includegraphics[height=1.8in, width=3.2in]{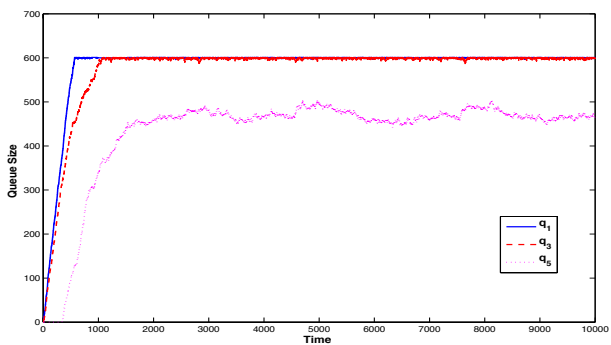}
\caption{Sample path backlog processes with $V=100$. }\label{fig:samplepathq}
\end{figure}

We observe in Fig. \ref{fig:samplepathq}  that the queue sizes usually fluctuate around certain fixed values. Similar ``exponential attraction'' phenomenon has been observed in prior work \cite{huangneely_dr_wiopt09}. Hence our results can also be extended, using the results developed in \cite{huangneely_dr_wiopt09}, to achieve an average utility that is within $O(1/V)$ of the optimal with only $\Theta([\log(V)]^2)$ average backlog size. In this case, we can also implement the PMW algorithm with finite buffers using the idea of \emph{floating queues} in \cite{scott_lifo_ipsn}, which works as follows: For each $q_j$, we associate with it an actual buffer of size $\Theta([\log(V)]^2)$ and a counter. When contents are sent into the queue and the buffer is not full, we store the contents in the actual buffer. However, when the buffer is full and contents are sent to $q_j$, these contents are dropped but the counter is incremented. Whereas if contents are consumed from $q_j$ but $q_j$ does not have enough contents, then the counter is decremented, and the action in that slot is assumed to be null. Under this method, it can be shown  that the  dropping and underflow events happen only with a very small probability. Hence almost all actions are valid. Thus we lose a tiny fraction in the utility performance, but reduce the average backlog size from $O(V)$ to $O([\log(V)]^2)$. 

\section{Conclusion}
In this paper, we develop the Perturbed Max-Weight algorithm (PMW) for utility optimization problems in general processing networks. PMW is based on the usual Max-Weight algorithm for data networks. It has two main functionalities: queue underflow prevention and utility optimal scheduling. PMW simultaneously achieves both objectives by carefully perturbing the weights used in the usual Max-Weight algorithm. We show that PMW is able to achieve an $[O(1/V), O(V)]$ utility-backlog tradeoff. The PMW algorithm developed here can be applied to  problems in the areas of data fusion, stream processing and cloud computing. 

\section*{Appendix A -- Proof of Theorem \ref{theorem:optutility}}
We prove Theorem  \ref{theorem:optutility} in this section, using an argument similar to the one used in \cite{huangneelypricing}. 
\begin{proof} (Theorem \ref{theorem:optutility}) 
Consider any stable scheduling policy $\Pi$, i.e.,  the conditions (\ref{eq:action-feasible-cond})  and (\ref{eq:queuestable}) are satisfied under $\Pi$. We let $\{(f(0), \bv{A}(0), \bv{\mu}(0)), (f(1), \bv{A}(1), \bv{\mu}(1)), ...\}$ be a sequence of (utility, arrival, service) triple generated by $\Pi$. Then there exists a subsequence of times $\{T_i\}_{i=1, 2, ...}$ such that $T_i\rightarrow\infty$ and that the limiting time average utility over times $T_i$ is equal to the liminf average utility under $\Pi$ (defined by (\ref{eq:timeavcost})). 
Now define the conditional average of utility, and arrival minus service over $T$ slots to be:
\begin{eqnarray}
\hspace{-.3in}&&(\phi^{(s_i)}(T); \epsilon_1^{(s_i)}(T); ...; \epsilon_r^{(s_i)}(T)) \triangleq \\
\hspace{-.3in}&&\qquad\qquad\qquad\frac{1}{T} \sum_{t=0}^{T-1} \expect{f(t); \epsilon_1(t); ...; \epsilon_r(t)\left.|\right. S(t)=s_i}, \nonumber
\end{eqnarray}
where $\epsilon_j(t)=A_j(t)-\mu_j(t)$. Using Caratheodory's theorem,  it can be shown, as in \cite{huangneelypricing} that, there exists a set of variables $\{a^{(s_i)}_k(T)\}_{k=1}^{r+2}$ and a set of actions $\{x^{(s_i)}_k(T)\}_{k=1}^{r+2}$ such that:
\begin{eqnarray*}
\hspace{-.3in}&&\phi^{(s_i)}(T) = \sum_{k=1}^{r+2} a^{(s_i)}_k(T)f(s_i, x^{(s_i)}_k(T)),
\end{eqnarray*}
and for all $j=1, ..., r$ that:
\begin{eqnarray*}
\hspace{-.3in}&&\epsilon^{(s_i)}_j(T)= \sum_{k=1}^{r+2} a^{(s_i)}_k(T)[A_j(s_i, x^{(s_i)}_k(T))-\mu_j(s_i, x^{(s_i)}_k(T))]. 
\end{eqnarray*}
Now using the continuity of  $f(s_i, \cdot), A_j(s_i, \cdot), \mu_j(s_i, \cdot)$, and the compactness of all the actions sets $\script{X}^{(s_i)}$, we can thus find a sub-subsequence $\tilde{T}_i\rightarrow\infty$ of $\{T_i\}_{i=1, 2, ...}$ that:
\begin{eqnarray}
\hspace{-.3in}&&a^{(s_i)}_k(\tilde{T}_i)\rightarrow a^{(s_i)}_k, x^{(s_i)}_k(\tilde{T}_i))\rightarrow x^{(s_i)}_k, \\
\hspace{-.3in}&&\phi^{(s_i)}(\tilde{T}_i)\rightarrow\phi^{(s_i)},  \epsilon^{(s_i)}_j(\tilde{T}_i)\rightarrow\epsilon^{(s_i)}_j, \,\forall\, j=1, ..., r. 
\end{eqnarray}
Therefore the time average utility under the policy $\Pi$ can be expressed as: 
\begin{eqnarray}
f^{\Pi}_{av} = \sum_{s_i} \pi_{s_i} \phi^{(s_i)}= \sum_{s_i}\pi_{s_i}\sum_{k=1}^{r+2} a^{(s_i)}_kf(s_i, x^{(s_i)}_k). \label{eq:obj-cara}
\end{eqnarray}
Similarly, the average arrival rate minus the average service rate under $\Pi$ can be written as:
\begin{eqnarray}
\epsilon_j &=& \sum_{s_i} \pi_{s_i}\epsilon_j^{(s_i)} \label{eq:cond-cara}\\
&=& \sum_{s_i} \pi_{s_i}\sum_{k=1}^{r+2} a^{(s_i)}_k [A_j(s_i, x^{(s_i)}_k) - \mu_j(s_i, x^{(s_i)}_k)] \nonumber\\
&\leq& 0. \nonumber
\end{eqnarray}
The last inequality is due to the fact that $\Pi$ is a stable policy and that $\expect{q_j(0)}<\infty$, hence the average arrival rate to any $q_j$ must be no more than the average service rate of the queue \cite{neelythesis}. 
However,  by  (\ref{eq:action-feasible-cond}) we see that what is consumed from a queue is always no more that what is generated into the queue. This implies that the  input rate into a queue is always no less than its output rate. Thus, $\epsilon_j\geq0$ for all $j$. Therefore we conclude that $\epsilon_j =0$ for all $j$. 
Using this fact and (\ref{eq:obj-cara}), we see that $Vf^{\Pi}_{av}\leq\phi^*$, where $\phi^*$ is given in (\ref{eq:primal_obj}). This proves Theorem  \ref{theorem:optutility}. 
\end{proof}

\section*{Appendix B -- Proof of Lemma \ref{lemma:drift-eq}}
Here we prove Lemma \ref{lemma:drift-eq}. 
\begin{proof}
Using the queueing  equation (\ref{eq:queuedynamic}), we have:
\begin{eqnarray*}
&& [q_j(t+1)-\theta_j]^2 \\
&=& [(q_j(t)-\mu_j(t)+A_j(t))-\theta_j]^2\\
&=& [q_j(t)-\theta_j]^2 +(\mu_j(t)-A_j(t))^2 \\
&& \qquad\qquad-2\big(q_j(t)-\theta_j\big) [\mu_j(t)-A_j(t)]\\
&\leq& [q_j(t)-\theta_j]^2 + 2\delta_{max}^2-2\big(q_j(t)-\theta_j\big) [\mu_j(t)-A_j(t)].  
\end{eqnarray*}
Multiplying both sides with $\frac{w_j}{2}$ and summing the above over $j=1, ..., r$, we see that:
\begin{eqnarray*}
L(t+1) - L(t) \leq B -\sum_{j=1}^r w_j\big(q_j(t)-\theta_j\big)[\mu_j(t)-A_j(t)], 
\end{eqnarray*}
where $B=\delta^2_{max}\sum_{j=1}^rw_j$. 
Now add to both sides the term $-Vf(t)$, we get:
\begin{eqnarray}
\hspace{-.2in} &&L(t+1) - L(t)  - Vf(t) \leq B -Vf(t) \label{eq:drift-samplepath}\\
\hspace{-.2in} &&\qquad\qquad\qquad\qquad - \sum_{j=1}^r w_j\big(q_j(t)- \theta_j\big)[\mu_j(t)-A_j(t)]. \nonumber
\end{eqnarray}
Taking expectations over the random network state $S(t)$ on both sides conditioning on $\bv{q}(t)$ proves the lemma. 
\end{proof}

\section*{Appendix C -- Proof of Theorem \ref{theorem:pmw1}}
Here we prove Theorem \ref{theorem:pmw1}. We first have the following simple lemma.
\begin{lemma}
For any network state $s_i$, we have: 
\begin{eqnarray}
D^{(s_i)*}_{\bv{\theta}, \bv{q}(t)}=g_{s_i}((\bv{q}(t)-\bv{\theta})\otimes\bv{w}), \label{eq:drift-dual-eq}
\end{eqnarray}
where $\bv{w}=(w_1, ..., w_r)^T$. 
\end{lemma}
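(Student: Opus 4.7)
The claim is essentially an algebraic identity that can be verified by plugging the definitions directly. My plan is to simply substitute $\bv{\gamma} = (\bv{q}(t)-\bv{\theta})\otimes\bv{w}$ into the definition of $g_{s_i}(\cdot)$ from (\ref{eq:dual-function-state}) and match it term-by-term with the objective inside the maximization that defines $D^{(s_i)*}_{\bv{\theta}, \bv{q}(t)}$ via (\ref{eq:pmw-action-func}) and (\ref{eq:QLAeq}).

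More concretely, I would first write out the $j$-th component of $(\bv{q}(t)-\bv{\theta})\otimes\bv{w}$ using the componentwise product notation introduced in Section~\ref{section:notation}, obtaining $\gamma_j = w_j(q_j(t) - \theta_j)$. Substituting into (\ref{eq:dual-function-state}) gives
\begin{eqnarray*}
g_{s_i}((\bv{q}(t)-\bv{\theta})\otimes\bv{w})
&=& \sup_{x \in \script{X}^{(s_i)}} \Big\{ Vf(s_i, x) \\
&& \quad - \sum_{j} w_j(q_j(t)-\theta_j)\big[A_j(s_i, x) - \mu_j(s_i, x)\big] \Big\}.
\end{eqnarray*}
Pulling the minus sign inside the bracket flips the sign to $\mu_j - A_j$, which is exactly the expression inside $D^{(s_i)}_{\bv{\theta}, \bv{q}(t)}(x)$ as defined in (\ref{eq:pmw-action-func}). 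Hence the supremum equals $\sup_{x \in \script{X}^{(s_i)}} D^{(s_i)}_{\bv{\theta}, \bv{q}(t)}(x) = D^{(s_i)*}_{\bv{\theta}, \bv{q}(t)}$ by (\ref{eq:QLAeq}).

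There is no real obstacle here; the only things to be careful about are (i) the sign convention relating $\mu_j - A_j$ (appearing in the drift/PMW objective) to $A_j - \mu_j$ (appearing in the dual function from the primal equality constraint (\ref{eq:primal_ratecond})), and (ii) correctly interpreting the Hadamard product $\otimes$ on the argument of $g_{s_i}$. Both are resolved by inspection. The content of the lemma is conceptual rather than technical: it identifies the per-state Max-Weight objective used by PMW with the per-state dual function evaluated at the perturbed queue weights $w_j(q_j(t)-\theta_j)$, which is the key bridge that lets the later performance proof of Theorem~\ref{theorem:pmw1} invoke the dual bound from Lemma~\ref{lemma:dualproblem} without going through an explicit stationary randomized policy.
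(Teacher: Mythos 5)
Your proposal is correct and is exactly the paper's argument: the paper's entire proof is ``by comparing (\ref{eq:QLAeq}) with (\ref{eq:dual-function-state}), we see that the lemma follows,'' and you have simply carried out that comparison explicitly, including the sign flip from $A_j-\mu_j$ to $\mu_j-A_j$ and the componentwise reading of $\otimes$. The only cosmetic remark is that the supremum in (\ref{eq:dual-function-state}) is attained (hence equals the maximum in (\ref{eq:QLAeq})) because $\script{X}^{(s_i)}$ is compact and $f$, $A_j$, $\mu_j$ are continuous, which you implicitly use.
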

\begin{proof}
By comparing (\ref{eq:QLAeq})  with (\ref{eq:dual-function-state}), we see that the lemma follows. 
\end{proof}

\begin{proof} (Theorem \ref{theorem:pmw1})
We first recall the equation (\ref{eq:drift-samplepath}) as follows:
\vspace{-.06in}
\begin{eqnarray}
\hspace{-.2in} &&L(t+1) - L(t)  - Vf(t) \leq B -Vf(t) \label{eq:drift-samplepath1}\\
\hspace{-.2in} &&\qquad\qquad\qquad\quad - \sum_{j=1}^r w_j\big(q_j(t)- \theta_j\big)[\mu_j(t)-A_j(t)]. \nonumber
\end{eqnarray}
Using $D^{(s_i)}_{\bv{\theta}, \bv{q}(t)}(x)$ defined in (\ref{eq:pmw-action-func}), this can be written as:
\begin{eqnarray}
\hspace{-.2in} &&L(t+1) - L(t)  - Vf(t) \leq  B - D^{(S(t))}_{\bv{\theta}, \bv{q}(t)}(x(t)). \nonumber
\end{eqnarray}
Here $x(t)$ is PMW's action at time $t$. 
According to Condition \ref{condition:pmw}, we see that for any network state $S(t)=s_i$, PMW ensures  (\ref{eq:action-feasible-cond}), and that:
\begin{eqnarray*}
D^{(s_i)}_{\bv{\theta}, \bv{q}(t)}(x) \geq D^{(s_i)*}_{\bv{\theta}, \bv{q}(t)}-C. 
\end{eqnarray*}
%
Using (\ref{eq:drift-dual-eq}), this implies that under PMW, 
\vspace{-.06in}
\begin{eqnarray*}
\hspace{-.4in} &&L(t+1) - L(t)  - Vf(t) \leq B -g_{s_i}((\bv{q}(t)-\bv{\theta})\otimes\bv{w}) + C. 
\end{eqnarray*}
Taking expectations over the random network state on both sides conditioning on $\bv{q}(t)$, and using (\ref{eq:separable}), i.e., $g(\bv{\gamma})=\sum_{s_i}\pi_{s_i}g_{s_i}(\bv{\gamma})$, we get:
\begin{eqnarray}
\hspace{-.55in} &&\Delta(t) -V\expect{f(t)\left.|\right.\bv{q}(t)}\leq B+C -g((\bv{q}(t)-\bv{\theta})\otimes\bv{w}). \label{eq:drift-dual}
\end{eqnarray}
Now using Theorem \ref{theorem:optutility} and Lemma \ref{lemma:dualproblem}, we have: 
\[Vf_{av}^*\leq \phi^* \leq g(\bv{\gamma}^*)\leq g((\bv{q}(t)-\bv{\theta})\otimes\bv{w}).\] 
Therefore,   
\vspace{-.06in}
\begin{eqnarray}
\Delta(t) -V\expect{f(t)\left.|\right.\bv{q}(t)}\leq B+C - Vf_{av}^*. 
\end{eqnarray}
Taking expectations over $\bv{q}(t)$ on both sides and 
summing the above over $t=0, ..., T-1$, we get: 
\vspace{-.06in}
\begin{eqnarray*}
\hspace{-.3in} &&\expect{L(T)-L(0) } -\sum_{t=0}^{T-1}V\expect{f(t) }  \leq T(B+C) -TVf_{av}^*. 
\end{eqnarray*}
Rearranging terms, dividing both sides by $VT$, using the facts that $L(t)\geq0$ and $\expect{L(0)}<\infty$, and taking the liminf as $T\rightarrow\infty$, we get: 
\vspace{-.06in}
\begin{eqnarray}
f^{PMW}_{av}\geq f^*_{av} - (B+C)/V. 
\end{eqnarray}
This proves (\ref{eq:performance-utility}). Now we prove  (\ref{eq:performance-backlog}). 
First, by using the definition of $\hat{g}(\bv{\gamma})$ in (\ref{eq:dual-function-prob}), 
and plugging in the $\{x^{(s_i)}_k, \vartheta^{(s_i)}_k\}_{i=1, ..., M}^{k=1, ..., r+2}$ variables in the $\eta$-slackness assumption (\ref{eq:slackness}) in Section \ref{subsection:costtrafficservice}, we see that: 
\begin{eqnarray}
\hat{g}((\bv{q}(t)-\bv{\theta})\otimes\bv{w}) \geq \eta \sum_{j=1}^rw_j[q_j(t)-\theta_j] - V\delta_{max}. 
\end{eqnarray}
This by Lemma \ref{lemma:dualproblem} implies that: 
\[g((\bv{q}(t)-\bv{\theta})\otimes\bv{w}) \geq \eta \sum_{j=1}^rw_j[q_j(t)-\theta_j]- V\delta_{max}.\]  
Using this in (\ref{eq:drift-dual}), we get: 
 \begin{eqnarray*}
\hspace{-.3in}&&\Delta(t) -V\expect{f(t)\left.|\right.\bv{q}(t)}\leq B+C + V\delta_{max}\\
\hspace{-.3in}&&\qquad\qquad\qquad\qquad\qquad\qquad\qquad\qquad- \eta \sum_{j=1}^rw_j[q_j(t)-\theta_j]. 
\end{eqnarray*}
We can now use a similar argument as above to get: 
\vspace{-.07in}
\begin{eqnarray*}
\hspace{-.3in}&&\eta \sum_{t=0}^{T-1}\sum_{j=1}^rw_j\expect{[q_j(t)-\theta_j]} \\
\hspace{-.3in}&&\qquad\qquad\qquad\leq T(B+C) +2TV\delta_{max} +\expect{L(0)}. 
\end{eqnarray*}
Dividing both sides by $\eta T$ and taking the limsup as $T\rightarrow\infty$, we get:
\vspace{-.16in}
\begin{eqnarray*}
\overline{q}^{PMW} 
& \leq& \frac{B +C +2V\delta_{max}}{\eta} +\sum_{j=1}^rw_j\theta_j. 
\end{eqnarray*}
This completes the proof the theorem. 
\end{proof}

\section*{Appendix D -- Proof of Lemma \ref{lemma:pmw-m-rhs} }
Here we prove Lemma \ref{lemma:pmw-m-rhs} by comparing the values of the three terms in $D_{\bv{\theta}, \bv{q}(t)}^{(S(t))}(x)$ in (\ref{eq:Dfunction}) under PMW versus their values under the action that 
maximizes $D_{\bv{\theta}, \bv{q}(t)}^{(S(t))}(x)$ in (\ref{eq:Dfunction}) subject to only the constraints $D_j(t)\in[0, 1], \,\forall\,j\in\script{Q}^s$ and $\bv{I}(t)\in\script{I}$, called the max-action. That is, under the max-action, $D_{\bv{\theta}, \bv{q}(t)}^{(S(t))}(x)=D_{\bv{\theta}, \bv{q}(t)}^{(S(t))*}(x)$. 
Note that the max-action differs from PMW only in that it does not consider the queue edge constraint. 

 
 
\begin{proof} 
(A) We see that the first term, i.e., $-\sum_{j\in\script{Q}^{s}} \big[Vc_j(t)+ w_j(q_j(t)-\theta)\big] D_j(t)R_j(t)$ is maximized under PMW.
Thus its value is the same as that under the max-action.

(B) We now show that for any processor $P_n\in\script{P}$, if it violates the queue edge constraint, then its weight is bounded by $M_pw_{max}\nu_{max}\beta_{max}$. This will then be used in Part (C)  below  to show that the value of $D_{\bv{\theta}, \bv{q}(t)}^{(S(t))}(x)$ under PMW is within a constant of $D_{\bv{\theta}, \bv{q}(t)}^{(S(t))*}(x)$ under the max-action. 

(B-I) For any $P_i\in\script{P}^{in}$, 
the following are the only two cases under which $P_i$ violates the queue edge constraint. 
\begin{enumerate} 
\item Its demand queue $q_h(t)\geq\theta$. In this case, it is easy to see from (\ref{eq:processor-weight-in}) and (\ref{eq:queuebound-gen-nu}) that:
\begin{eqnarray}
\hspace{-.2in} W^{(in)}_i(t)\leq \sum_{j\in\mathbb{Q}_i^S} w_j\nu_{max}\beta_{ij}\leq  M_pw_{max}\nu_{max}\beta_{max}. \label{eq:weightbound-in0}
\end{eqnarray}
\item One of $P_i$'s supply queue has a queue size less than $M_q^s\beta_{max}$. In this case, we denote $\hat{\mathbb{Q}}_i^S=\{q_j\in\mathbb{Q}_i^S: q_j(t)\geq M_q^s\beta_{max} \}$. Then we see that: 
\begin{eqnarray}
\hspace{-.6in} && W^{(in)}_i(t) = \sum_{j\in\hat{\mathbb{Q}}^S_i } w_j(q_j(t)-\theta)\beta_{ij} - w_h(q_h(t)-\theta)\alpha_{ih}  \nonumber\\
\hspace{-.6in} &&\qquad\qquad\qquad +\sum_{j\in\mathbb{Q}^S_i/ \hat{\mathbb{Q}}^S_i} w_j(q_j(t)-\theta)\beta_{ij}  - VC_i(t)   \nonumber\\
\hspace{-.6in} && \qquad\quad\quad \leq \sum_{j\in\hat{\mathbb{Q}}^S_i } w_j\nu_{max}\beta_{ij}   +w_h\theta \alpha_{ih}   \nonumber\\
\hspace{-.6in} && \qquad\qquad\qquad+ \sum_{j\in\mathbb{Q}^S_i/ \hat{\mathbb{Q}}^S_i}w_j(M^s_q\beta_{max}-\theta)\beta_{ij}. \nonumber
\end{eqnarray}
Here $q_h=\mathbb{Q}^D_i$. 
Now by our selection of $\{w_j\}_{j=1}^r$,  $w_j\beta_{ij}\geq w_h\alpha_{ih}$ for any $q_j\in\mathbb{Q}^S_i$. Also using $\nu_{max}\geq M_q^s\beta_{max}$, we have: 
\begin{eqnarray}
\hspace{-.5in} && W^{(in)}_i(t)  \leq M_pw_{max}\nu_{max}\beta_{max}. \label{eq:weightbound-in}
\end{eqnarray}
\end{enumerate}

(B - II) For any $P_k\in\script{P}^{o}$, we see that it violates the queue edge constraint only when one of its supply queues has size less than $M_q^s\beta_{max}$. In this case, we see that:
\begin{eqnarray*}
\hspace{-.3in} &&W^{(o)}_k(t) \leq \sum_{j\in\hat{\mathbb{Q}}^S_k } w_j(q_j(t)-\theta)\beta_{kj} + Vp_k(t)\alpha_{ko}\nonumber\\
\hspace{-.3in} &&\qquad\qquad\qquad +  \sum_{j\in\mathbb{Q}^S_k/ \hat{\mathbb{Q}}^S_k} w_j(M_q^s\beta_{max}-\theta)\beta_{ij} \nonumber \\
\hspace{-.3in} && \qquad\quad\,\,\, \leq  M_pw_{max}\nu_{max}\beta_{max}  + V\alpha_{max}p_{max} - w_{min}\theta\beta_{min}.\nonumber
\end{eqnarray*}
This by (\ref{eq:theta-cond-gen}) implies that: 
\begin{eqnarray}
\hspace{-.3in} &&  W^{(o)}_k(t)  \leq  M_pw_{max}\nu_{max}\beta_{max}.  \label{eq:weightbound-out}
\end{eqnarray}
Using (\ref{eq:weightbound-in0}), (\ref{eq:weightbound-in}) and (\ref{eq:weightbound-out}), we see that whenever a processor violates the queue edge constraint, its weight is at most $ M_pw_{max}\nu_{max}\beta_{max}$. 

(C) We now show that the value of $D_{\bv{\theta}, \bv{q}(t)}^{(S(t))}(x)$ under PMW satisfies $D_{\bv{\theta}, \bv{q}(t)}^{(S(t))}(x)\geq D_{\bv{\theta}, \bv{q}(t)}^{(S(t))*}(x)-C$, where $C= N_pM_pw_{max}\nu_{max}\beta_{max}$. 

To see this, 
let $\bv{I}^*(t)$ be the activation vector obtained by the max-action, and let $W^*(t)$ be the value of (\ref{eq:maxweight}) under $\bv{I}^*(t)$. We also use $\bv{I}^{PMW}(t)$ and $W^{PMW}(t)$ to denote the activation vector chosen by the PMW algorithm and the value of (\ref{eq:maxweight}) under $\bv{I}^{PMW}(t)$. 
We now construct an alternate activation vector $\hat{\bv{I}}(t)$ by changing all elements in $\bv{I}^*(t)$ corresponding to the processors that violate the queue edge constraints to zero. Note then $\hat{\bv{I}}(t)\in\script{I}$ is a feasible activation vector at time $t$, under which no  processor violates the queue edge constraint. By Part (B) above, we see that the value of (\ref{eq:maxweight}) under $\hat{\bv{I}}(t)$, denoted by $\hat{W}(t)$, satisfies:
\begin{eqnarray*}
\hat{W}(t)\geq W^*(t)-N_pM_pw_{max}\nu_{max}\beta_{max}. 
\end{eqnarray*}
Now since $\bv{I}^{PMW}(t)$ maximizes the value of (\ref{eq:maxweight}) under the queue edge constraints, we have:
\begin{eqnarray*}
W^{PMW}(t)&\geq& \hat{W}(t)\\
&\geq& W^*(t)-N_pw_{max}M_p\nu_{max}\beta_{max}. 
\end{eqnarray*}
Thus, by combining the above and Part (A), we see that PMW maximizes the $D_{\bv{\theta}, \bv{q}(t)}^{(S(t))}(x)$ to within $C=N_pM_pw_{max}\nu_{max}\beta_{max}$ of the maximum. 
\end{proof}

\section*{Appendix E -- Proof of Lemma \ref{lemma:wvalues}}
\begin{proof} (Proof of Lemma \ref{lemma:wvalues}) 
The proof consists of two main steps. In the first step, we show that the algorithm updates each  $w_j$ value at least once. This shows that all the $w_j$ values for all the queues that serve as demand queues are updated at least once. 
In the second step, we show that if $q_h$ is the demand queue of a processor $P_i\in\script{P}^{in}$, then every time after  $w_{h}$ is updated, the algorithm will also update $w_j$ for any $q_j\in\mathbb{Q}^S_i$ before it terminates. This  ensures that (\ref{eq:w-cond-gen}) holds for any $P_i\in\script{P}^{in}$ and hence proves the lemma. 

First we see that after $K$ iterations, we must have $\script{Q}\subset \cup_{\tau=1}^K\mathbb{Q}^l_{\tau}$. This is because at Iteration $k$, we include in $\cup_{\tau=1}^k\mathbb{Q}^l_{\tau}$ all the queues starting from which there exists a path to an output processor that contains $k$ processors. Thus all the $w_j$ values are updated at least once. 

Now consider a queue $q_h$. Suppose $q_h$ is the demand queue of a processor $P_i\in\script{P}^{in}$. We see that there exists a time $\hat{k}\leq K$ at which $w_h$ is last modified. 
Suppose $w_h$ is last modified at Iteration $\hat{k}<K$, in which case $q_h\in\mathbb{Q}^l_{\hat{k}}$. Then all the queues $q_j\in\mathbb{Q}^S_i$ will be in $\mathbb{Q}^l_{\hat{k}+1}$. Thus their $w_j$ values will be modified at Iteration $\hat{k}+1\leq K$. This implies that at Iteration $\hat{k}+1$, we will have $w_j(\hat{k}+1)\beta_{ij}\geq w_h(\hat{k})\alpha_{ih}$.  Since $q_h\notin\mathbb{Q}^l_{k}$ for $ k\geq\hat{k}+1$, we have $w_h(k)=w_h(\hat{k})$ for all $k\geq \hat{k}+1$. Therefore $w_j(k)\beta_{ij}\geq w_h(k)\alpha_{ih}$ $\forall\,\,\hat{k}+1\leq k\leq K$, because $w_j(k)$ is not decreasing. 

Therefore the only case when the algorithm can fail is when $w_h$ is updated at Iteration $k=K$, in which case $w_h$ may increase but the $w_j$ values for $q_j\in\mathbb{Q}^S_i$ are not modified accordingly. However, since $w_h$ is updated at Iteration $k=K$, this implies that there exists a path from $q_h$ to an output processor that has $K$ processors. This in turn implies that starting from any $q_j\in\mathbb{Q}^S_i$, there exists a path to an output processor that contains $K+1$ processors. This contradicts the definition of $K$.  Thus the lemma follows. 
\end{proof}

$\vspace{-.2in}$
\bibliographystyle{unsrt}
\bibliography{../mybib}

\end{document}